
\documentclass[11pt]{article}
\usepackage{amssymb}
\usepackage{amsmath}
\usepackage{graphicx}
\usepackage{setspace}

\oddsidemargin 0pt
\evensidemargin 0pt
\marginparwidth 40pt

\topmargin 0pt           
\headsep 20pt            

\tolerance=1000
\textheight 8.8in
\textwidth 6.6in

\begin{document}

\newtheorem{theorem}{Theorem}[section]
\newtheorem{conjecture}[theorem]{Conjecture}
\newtheorem{corollary}[theorem]{Corollary}
\newtheorem{lemma}[theorem]{Lemma}
\newtheorem{claim}[theorem]{Claim}
\newtheorem{proposition}[theorem]{Proposition}
\newtheorem{construction}[theorem]{Construction}
\newtheorem{definition}[theorem]{Definition}
\newtheorem{question}[theorem]{Question}
\newtheorem{problem}[theorem]{Problem}
\newtheorem{remark}[theorem]{Remark}
\newtheorem{observation}[theorem]{Observation}

\newcommand*{\qed}{\hfill\ensuremath{\blacksquare}}%
\newcommand{\ex}{{\mathrm{ex}}}

\newcommand{\EX}{{\mathrm{EX}}}

\newcommand{\AR}{{\mathrm{AR}}}

\def\endproofbox{\hskip 1.3em\hfill\rule{6pt}{6pt}}
\newenvironment{proof}%
{%
\noindent{\it Proof.}
}%
{%
 \quad\hfill\endproofbox\vspace*{2ex}
}
\def\qed{\hskip 1.3em\hfill\rule{6pt}{6pt}}
\def\ce#1{\lceil #1 \rceil}
\def\fl#1{\lfloor #1 \rfloor}
\def\lr{\longrightarrow}
\def\e{\varepsilon}
\def\ex{{\rm\bf ex}}
\def\cA{{\cal A}}
\def\cB{{\cal B}}
\def\cC{{\cal C}}
    \def\cD{{\cal D}}
\def\cF{{\cal F}}
\def\cG{{\cal G}}
\def\cH{{\cal H}}
\def\ck{{\cal K}}
\def\cI{{\cal I}}
\def\cJ{{\cal J}}
\def\cL{{\cal L}}
\def\cM{{\cal M}}
\def\cP{{\cal P}}
\def\cQ{{\cal Q}}
\def\cR{{\cal R}}
\def\cS{{\cal S}}
\def\cE{{\cal E}}
\def\cT{{\cal T}}
\def\ex{{\rm ex}}
\def\pr{{\rm Pr}}
\def\exp{{\rm  exp}}

\def\wt{\widetilde{T}}
\def\bkl{{\cal B}^{(k)}_\ell}
\def\cmkt{{\cal M}^{(k)}_{t+1}}
\def\cpkl{{\cal P}^{(k)}_\ell}
\def\cckl{{\cal C}^{(k)}_\ell}
\def\pkl{\mathbb{P}^{(k)}_\ell}
\def\ckl{\mathbb{C}^{(k)}_\ell}

\def\mC{{\cal C}}

\def\imp{\Longrightarrow}
\def\1e{\frac{1}{\e}\log \frac{1}{\e}}
\def\ne{n^{\e}}
\def\rad{ {\rm \, rad}}
\def\equ{\Longleftrightarrow}
\def\pkl{\mathbb{P}^{(k)}_\ell}

\def\mE{\mathbb{E}}

\def\mP{\mathbb{P}}

\def \e{\epsilon}
\voffset=-0.5in
	
\setstretch{1.1}
\pagestyle{myheadings}
\markright{{\small \sc  Jiang, Ma, Yepremyan:}
  {\it\small On Tur\'an exponents of bipartite graphs}}

\title{\huge\bf  On Tur\'an exponents of bipartite graphs}

\author{
Tao Jiang\thanks{Department of Mathematics, Miami University, Oxford,
OH 45056, USA. E-mail: jiangt@miamioh.edu. Research supported in part
by National Science Foundation grant DMS-1400249. }
\quad \quad Jie Ma \thanks{
School of Mathematical Sciences,
University of Science and Technology of China, Hefei, 230026,
P.R. China. Email: jiema@ustc.edu.cn.
Research supported in part by National Natural Science Foundation of China grants 11501539 and 11622110.
}
\quad \quad Liana Yepremyan
\thanks{
Department of Mathematics, University of Oxford, UK. E-mail:yepremyan@maths.ox.ac.uk.
Research supported in part by ERC Consolidator Grant 647678.
 \newline\indent
{\it 2010 Mathematics Subject Classifications:}
05C35.\newline\indent
{\it Key Words}:  Tur\'an number,  Tur\'an exponent, extremal function, cube.
} }

\date{\today}

\maketitle
\begin{abstract}

A long-standing conjecture of  Erd\H{o}s and Simonovits asserts that for every rational number $r\in (1,2)$ there exists a  bipartite graph $H$ such that $\ex(n,H)=\Theta(n^r)$. So far this conjecture is known to be true only for rationals of form $1+1/k$ and $2-1/k$, for integers $k\geq 2$.
In this  paper we add a new form of rationals for which the conjecture is true; $2-2/(2k+1)$, for $k\geq 2$. This in its turn also gives an affirmative answer to a question of Pinchasi and Sharir on cube-like graphs.

Recently, a version of Erd\H{o}s and Simonovits's conjecture where one replaces a single graph by a family, was  confirmed by Bukh and Conlon. They proposed a construction of bipartite graphs which should satisfy  Erd\H{o}s and Simonovits's conjecture. Our result can also be viewed  as a first step towards verifying Bukh and Conlon's conjecture. 

We also prove the an upper bound on the Tur\'an's number of $\theta$-graphs in an asymmetric setting and employ this result to obtain yet another new rational exponent for Tur\'an exponents; $r=7/5$.
\end{abstract}

\section{Introduction}
Given a family $\cH$ of graphs,  a graph $G$ is called {\it $\cH$-free} if it contains no member of $\cH$ as a subgraph.
The Tur\'an number $ex(n,\cH)$ of $\cH$ is the maximum number of edges in an $n$-vertex $\cH$-free graph.
When $\cH$ consists of a single graph $H$, we
write $ex(n,H)$ for $ex(n,\{H\})$. The study of Tur\'an numbers plays a central
role in extremal graph theory.
The celebrated Erd\H{o}s-Simonovits-Stone theorem \cite{ES, EStone} states that
if $\chi(\cH)$ denotes the minimum chromatic number of a graph in $\cH$, then
$$\ex(n,\cH)=\left(1-\frac{1}{\chi(\cH)-1}\right)\binom{n}{2}+o(n^2).$$
Thus, the function is asymptotically determined if $\chi(\cH)\geq 3$.
If $\chi(\cH)=2$, that is, if $\cH$ contains a bipartite graph, then
this only gives $\ex(n,\cH)=o(n^2)$. We will refer to $\ex(n,\cH)$ with $\chi(\cH)=2$ as {\it degenerate Tur\'an numbers},
as in \cite{FS-survey}. Concerning degenerate Tur\'an numbers,
there are several general conjectures
(see \cite{FS-survey}).  First, Erd\H{o}s and Simonovits conjectured that if
$\cH$ is a finite family with $\chi(\cH)=2$ then there is a rational $r\in [1,2)$ and a constant $c>0$
such that $\lim_{n\to \infty}\ex(n,\cH)/n^r =c$. (See Conjecture 1.6 of \cite{FS-survey}).
This conjecture is still wide open. In fact the order of magnitude of $\ex(n,\cH)$ where $\chi(\cH)=2$ is known only for very few families $\cH$.
Another conjecture, which may be viewed as the inverse extremal problem of the previous one, is that
for very rational $r\in [1,2)$ there exists a finite family $\cH$ of graphs such that $c_1n^r< \ex(n,\cH)<c_2 n^r$
for some constants $c_1,c_2$. (See Conjecture 2.37 of \cite{FS-survey}.)
In a recent breakthrough work by Bukh and Conlon \cite{BC}, this second conjecture has been verified, using
a random algebraic method (developed earlier in \cite{BBK, bukh, conlon}).

However, the following analogous problem on the Tur\'an number of a single bipartite graph, raised by Erd\H{o}s and Simonovits \cite{Erdos},
on the other hand, is still wide open.

\begin{question}{\rm (\cite{Erdos})}\label{pro:ES}
Is it true that for every rational number $r$ in $(1,2)$ there exists a single bipartite graph $H_r$ such that $\ex(n,H_r)=\Theta(n^r)$?
\end{question}

We will refer to a rational $r$ for which Problem \ref{pro:ES} has an affirmative answer as a {\it Tur\'an exponent} for a single graph.
The only known Tur\'an exponents for single graphs from the literature are rational numbers of the forms
$1+\frac{1}{s}$ and $2-\frac{1}{s}$ for all integers $s\geq 2$.  Specifically it is known that $\ex(n,K_{s,t})=\Theta(n^{2-1/s})$
when $t>(s-1)!$ (by \cite{KST, KRS, ARS}). Let $\theta_{s,p}$ denote the graph obtained by taking the union of $p$ internally
disjoint paths of length $s$ between a pair of vertices. Faudree and Simnovits \cite{FS} showed that
$\ex(n,\theta_{s,p})=O(n^{1+1/s})$ for all $p\geq 2$ (see \cite{BT} for a recent improvement on the specific bound)
while Conlon \cite{conlon} showed that for every $s\geq 2$ there exists
a $p_0$ such that for all $p\geq p_0$ we have $\ex(n,\theta_{s,p})=\Omega(n^{1+1/s})$. Hence for each $s$ and sufficiently large $p$
we have $\ex(n,\theta_{s,p})=\Theta(n^{1+1/s})$.
For a more thorough introduction to degenerate Tur\'an numbers,
the reader is referred to the recent survey by F\"uredi and Simonovits \cite{FS-survey}.

Our main theorem is as follows, which in particular establishes an infinite sequence of  new Tur\'an exponents.

\begin{theorem}\label{thm:main-exponents}
For any rational number $r=2-\frac{2}{2s+1}$, where $s\geq 2$ is an integer, or $r=\frac75$,
there exists a single bipartite graph $H_r$ such that $\ex(n,H_r)=\Theta(n^r)$.
\end{theorem}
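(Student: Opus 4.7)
The plan is to produce, for each target exponent $r$, an explicit bipartite graph $H_r$ together with matching upper and lower bounds on $\ex(n,H_r)$. The two families of exponents require different ingredients.

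For $r = 2 - \tfrac{2}{2s+1}$ with $s \geq 2$, we take $H_r$ to be an appropriate ``cube-like'' bipartite graph; the case $s=2$ (where $r = 8/5$) is modeled on the classical Erd\H{o}s--Simonovits bound $\ex(n,Q_3)=O(n^{8/5})$, and for general $s$ we take $H_r$ to be the natural higher-order analogue suggested by the question of Pinchasi and Sharir on cube-like graphs. For the upper bound we intend a two-step argument: first, in any host graph $G$ with more than $Cn^{2-2/(2s+1)}$ edges, an averaging/convexity step produces many ``$s$-rich'' configurations, namely $s$-tuples whose common neighborhoods are unusually large; second, a Zarankiewicz- or K\H{o}v\'ari--S\'os--Tur\'an-type lemma, applied to a bipartite incidence structure built from these configurations, forces an embedded copy of $H_r$. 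For the matching lower bound we invoke the random algebraic construction of Bukh--Conlon \cite{BC}: a suitable variety in $\mathbb{F}_q^d \times \mathbb{F}_q^d$ yields a random bipartite graph of density roughly $n^{-2/(2s+1)}$ that avoids $H_r$ with positive probability, after which a standard deletion argument produces the required extremal graph.

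For $r=7/5$, the engine is the asymmetric Tur\'an bound on $\theta$-graphs mentioned in the abstract. Specifically, we establish that a bipartite graph with parts of sizes $m$ and $n$ and no $\theta_{s,t}$ has at most $C(m^{\alpha} n^{\beta}+m+n)$ edges, for explicit exponents $\alpha,\beta$ depending on $s,t$; this refines the symmetric bound of Faudree--Simonovits. Choosing $s,t$ and a target graph $H_{7/5}$ (for instance, one built by gluing theta graphs along a common part in an asymmetric manner) so that the exponents combine to produce $n^{7/5}$, one obtains the upper bound, and the matching lower bound construction is again of Bukh--Conlon flavor.

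The central difficulty in both parts is the upper bound. In the cube-like family, the main obstacle is controlling the polynomial losses across the nested common-neighborhood arguments tightly enough to land on the exact exponent $2-2/(2s+1)$ rather than a weaker bound: each iteration of the Zarankiewicz step must be balanced against the averaging step so that no factor of $n^{o(1)}$ is wasted. In the $7/5$ case, the crux is proving the asymmetric $\theta$-graph bound with sharp exponents in both $m$ and $n$; once this is in hand, combining it with the right target graph and the algebraic construction should be essentially routine.
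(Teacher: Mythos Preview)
Your high-level architecture is right: for $r=2-\tfrac{2}{2s+1}$ the target graph is the cube-like $H_{s,t}$ (two copies of $K_{s,t}$ joined by a matching), and for $r=7/5$ the target is a graph built from $\theta_{3,p}$; in both cases the lower bound comes from Bukh--Conlon applied to the $p$th power of a balanced rooted tree contained in the target. That part of your plan matches the paper.

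The gap is in the upper bound for the cube-like family. Your two-step plan---produce many ``$s$-rich'' tuples by averaging, then apply a K\H{o}v\'ari--S\'os--Tur\'an lemma to an incidence structure---is essentially the Pinchasi--Sharir approach, and that approach only yields $\ex(n,H_{s,t})=O(n^{2-2/(2s+1)})$ under the \emph{additional} hypothesis that $G$ is $K_{s+1,s+1}$-free. Removing that extra hypothesis is precisely the difficulty, and it is not a matter of ``controlling polynomial losses tightly'': the obstruction is structural, not quantitative. The paper's key new idea is a double-counting over pairs $(M,L)$ where $M$ is an $(s-1)$-matching and $L$ is an $s$-matching lying in the ``neighbourhood graph'' $N(M)$. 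One splits such pairs into \emph{$2t$-correlated} pairs (some vertex of $M$ has large degree in $N(L)$) and non-correlated pairs. A short combinatorial lemma shows that for fixed $M$ the number of correlated $L$ is small if $G$ is $H_{s,t}$-free; for non-correlated pairs one uses that $N(L)$ has bounded matching number (again from $H_{s,t}$-freeness) and then K\"onig--Egerv\'ary to bound them from the $L$ side. Comparing with a convexity lower bound on the total number of pairs forces $e(G)=O(n^{2-2/(2s+1)})$. Your proposal does not contain this mechanism, and a direct KST step will not produce the matching structure of $H_{s,t}$ without it.

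For $r=7/5$ you are closer: the asymmetric bound $z(m,n,\theta_{3,p})=O((mn)^{2/3}+m+n)$ is indeed the engine. But ``choosing $s,t$ so that the exponents combine to $7/5$'' is not how it is used. The paper fixes the target graph $L_3(\theta_{3,p})$ (which contains the $3$-comb-pasting $S_p$), runs a BFS from a vertex of minimum degree, and applies the asymmetric $\theta_{3,p}$ bound between successive distance levels $L_1,L_2$ and $L_2,L_3$ to force $|L_3|>n$ when $e(G)\gg n^{7/5}$; some care (splitting $L_3$ into ``rich'' and ``non-rich'' parts relative to the BFS tree) is needed to ensure a $\theta_{3,p}$ between levels actually extends to $L_3(\theta_{3,p})$. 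You should make the target graph and this level-growth argument explicit.
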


In establishing the first part of our main theorem, we establish a stronger result concerning the Tur\'an numbers of cube-like graphs, which
answers a question of Pinchasi and Sharir  \cite{PS}.  
This result may be of independent interest. To establish the second part of our main result,
we develop an asymmetric Tur\'an bound on  $\theta_{s,p}$ which may be viewed as a common generalization (in a general sense)
of  \cite{FS} and \cite{NV}, and may also be of independent interest.

To describe our results, we need some more detailed background, which we discuss over several subsections.


\subsection{The theorem of Bukh and Conlon and a conjecture}

To describe Bukh and Conlon's results, we need some definitions.
Given a tree $T$ together with an independent set $R\subseteq V(T)$, we call $(T,R)$ a {\it rooted tree} and $R$ the {\it root set}.
Given any $S\subseteq V(T)\setminus R$, let $e(S)$ denote the number of edges of $T$ with at least one endpoint in $S$. Let $\rho_S=e(S)/|S|$.
Let $\rho_T=\rho(V(T)\setminus R)$.
We say that the rooted tree $(T,R)$ is {\it balanced} if $\rho_S\geq \rho_T$ for all $S\subseteq V(T)\setminus R$.
Given a rooted tree $(T,R)$ and a positive integer $p$,
let $\cT^p_R$ denote the family of graphs consisting of all possible union of $p$ distinct labelled copies of $T$,
each of which agree on the root set $R$. We call $\cT^p_R$ the {\it $p$th power} of $(T,R)$.
The key result of Bukh and Conlon \cite{BC} is the following
\begin{theorem} {\rm (\cite{BC})} \label{BC-lower}
For any balanced rooted tree $(T,R)$, there exists a $p_0$ such that for all $p\geq p_0$, $$\ex(n,\cT^p_R)=\Omega(n^{2-1/\rho_T}).$$
\end{theorem}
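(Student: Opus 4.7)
My plan is to follow the random algebraic method developed in \cite{bukh, BBK, conlon}. Write $\rho_T = a/b$ in lowest terms, fix a large prime power $q$, and set $V := \mathbb{F}_q^a$ so that $n = |V| = q^a$. I would sample a random symmetric polynomial map $f : V \times V \to \mathbb{F}_q^b$ of some bounded degree $D = D(T,p)$ and define a graph $G$ on $V$ by joining $x \neq y$ whenever $f(x,y) = 0$. Since $f(x,y)$ is essentially uniform on $\mathbb{F}_q^b$ for $x \neq y$, the expected edge count is $\Theta(n^2 q^{-b}) = \Theta(q^{2a-b}) = \Theta(n^{2-1/\rho_T})$.

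The core of the argument is a moment estimate. For an ordered injection $\psi : R \hookrightarrow V$, let $X(\psi)$ count the extensions of $\psi$ to an embedding $T \hookrightarrow G$; the total number of labelled copies of members of $\cT^p_R$ equals $\sum_\psi X(\psi)(X(\psi)-1)\cdots(X(\psi)-p+1)$. I would expand $\mathbb{E}[X(\psi)^p]$ as a sum over $p$-tuples $(\phi_1,\dots,\phi_p)$ of candidate extensions, grouped by their \emph{collision pattern} $\Pi$ recording which non-root vertices of the $p$ copies are identified. A pattern with $s_\Pi$ distinct non-root vertices and $e_\Pi$ edges contributes (up to constants) $q^{a s_\Pi - b e_\Pi}$. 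Submodularity of the set function $S \mapsto e_T(S) - \rho_T|S|$ together with the balanced hypothesis $\rho_S \geq \rho_T$ for every $S \subseteq V(T)\setminus R$ yields $e_\Pi / s_\Pi \geq \rho_T$, so this exponent is $\leq 0$ for each pattern and equals $0$ only at the collision-free pattern. Summing over the bounded number of patterns gives $\mathbb{E}[X(\psi)^p] = O_p(1)$, uniformly in $\psi$.

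To extract a $\cT^p_R$-free subgraph with $\Omega(n^{2-1/\rho_T})$ edges I would invoke a quantitative Schwartz--Zippel / Lang--Weil estimate (in the spirit of \cite{conlon}) to argue that, with positive probability over $f$, for every $\psi$ outside a proper ``bad'' subvariety $W \subseteq V^{|R|}$ of $\mathbb{F}_q$-size $o(n^{|R|})$ the variety of extensions of $\psi$ is zero-dimensional with at most $C = C(T,p,D)$ points. Choosing $p_0 := C+1$ then guarantees $X(\psi) < p$ for every $\psi \notin W$, so every surviving copy of a member of $\cT^p_R$ is rooted at a placement in $W$; one destroys these by deleting $o(n^{2-1/\rho_T})$ edges in the incidence neighbourhood of the comparatively few vertices used by such placements. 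The main obstacle I expect is the collision-pattern analysis in the second paragraph: verifying $e_\Pi / s_\Pi \geq \rho_T$ uniformly in $\Pi$ requires decomposing an arbitrary gluing into connected pieces arising from identified subsets of $V(T)\setminus R$ across different copies of $T$ and summing the inequalities $\rho_S \geq \rho_T$ via submodularity, with particular care for ``crossing'' identifications that pair non-root vertices carrying distinct $T$-labels. A careful quantitative bound on $|W|$ is the second delicate input and dictates how large $D$ must be taken.
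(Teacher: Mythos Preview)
The paper does not prove Theorem~\ref{BC-lower}: it is quoted verbatim from Bukh and Conlon \cite{BC} and used as a black box (see the sentence following its statement and Proposition~\ref{prop-lower}). There is therefore nothing in the present paper to compare your argument against.

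That said, your sketch is recognisably the Bukh--Conlon random algebraic argument and the main ideas are in place: the parametrisation $n=q^a$ with $\rho_T=a/b$, the moment computation $\mathbb{E}[X(\psi)^p]=O_p(1)$ via collision patterns, and the Lang--Weil dichotomy that forces $X(\psi)$ to be either $\leq C$ or $\geq \Omega(q)$. Two points deserve tightening. First, the ``bad'' set of root placements is not a subvariety of $V^{|R|}$; it is just the (random) set $\{\psi:X(\psi)>C\}$, and its smallness is established by combining the dichotomy with the moment bound and Markov's inequality, not by an algebraic-geometric dimension count on the $\psi$-space. Second, your cleanup step is not correct as written: deleting edges ``in the incidence neighbourhood of the comparatively few vertices used by such placements'' does not control anything, since a bad $\psi$ may have $\Omega(q)$ or more extensions, none of which need use an edge incident to $R$ (recall $R$ is only required to be independent). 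The actual deletion is one edge per \emph{extension} of each bad $\psi$; the total number of such deletions is $\sum_\psi X(\psi)\mathbf{1}[X(\psi)>C]$, which one bounds in expectation by writing $\mathbf{1}[X(\psi)>C]\leq (X(\psi)/cq)^m$ under the dichotomy and invoking the $(m+1)$st moment bound for $m$ large enough that $q^{a|R|-m}=o(q^{2a-b})$.
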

A straightforward counting argument shows that $\ex(n,\cT^p_R)=O(n^{2-1/\rho_T})$ and
thus implies that $\ex(n,\cT^p_R)=\Theta(n^{2-1/\rho_T})$ for sufficiently large $p$.
Bukh and Conlon \cite{BC} also showed that for each rational $r$ in $(1,2)$, there exists a balanced rooted tree $(T,R)$ with $\rho_T=\frac{1}{2-r}$,
thereby establishing the existence of a family $\cH_r$ with $\ex(n,\cH_r)=\Theta(n^r)$ for each rational $r\in (1,2)$.

Let $(T,R)$ be a balanced rooted tree. Let $T^p_R$ denote the unique member of $\cT^p_R$ in which the $p$ labelled copies of $T$ 
are pairwise vertex disjoint outside $R$. By Theorem \ref{BC-lower}, 
\[\ex(n,T^p_R)\geq \ex(n,\cT^p_R)= \Omega(n^{2-1/\rho_T}).\]
If there exists a matching upper bound on $\ex(n,T^p_R)$, then together with earlier discussion this would
answer Question \ref{pro:ES} in the affirmative in a very strong way. Indeed Bukh and Conlon conjectured that
a matching upper bound indeed exists.

\begin{conjecture} \label{BC-conjecture} {\rm (\cite{BC})}
If $(T,R)$ is a balanced rooted tree, then 
\[\ex(n, T^p_R) =O(n^{2-1/\rho_T}).\]
\end{conjecture}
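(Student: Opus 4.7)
The plan is to attack Conjecture~\ref{BC-conjecture} through a supersaturation-style embedding argument, generalizing the K\H{o}v\'{a}ri--S\'{o}s--Tur\'{a}n proof of $\ex(n,K_{s,t}) = O(n^{2-1/s})$ and the Faudree--Simonovits proof of $\ex(n,\theta_{s,p}) = O(n^{1+1/s})$, both of which are instances of the conjecture. The first step is a reduction to an almost regular host graph: given $e(G) \geq C n^{2-1/\rho_T}$, iterative deletion of low-degree vertices produces an induced subgraph on $\Theta(n)$ vertices with minimum degree $\delta = \Omega(n^{1-1/\rho_T})$, losing only a constant factor in the edge count.

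The second step is to count injective tree embeddings $\phi: V(T) \hookrightarrow V(G)$. I would order the non-root vertices $u_1,\dots,u_m$ of $T$ compatibly with the tree structure so that each $u_i$ has $b_i$ neighbors in $R \cup \{u_1,\dots,u_{i-1}\}$, with $\sum_i b_i = e(T)$. A Jensen/convexity estimate (averaging over the image of the previously placed vertices) shows that the expected number of valid extensions for $\phi(u_i)$ is at least on the order of $\delta^{b_i}/n^{b_i-1}$, and multiplying these contributions together gives at least $C^{e(T)} \cdot n^{|R|}$ injective embeddings of $T$ in $G$ after subtracting an $O(n^{|R|-1})$ correction for degenerate maps. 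Pigeonhole on the image $\phi(R)$ then yields a root image $R_0$ extended by at least $C^{e(T)}$ copies of $T$ in $G$.

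The third and crucial step is to extract from this family a subcollection of $p$ copies that are pairwise vertex-disjoint outside of $R_0$, which then assemble into the desired $T^p_R$. A naive greedy approach can fail because many copies could share common non-root vertices. The balanced hypothesis should come in precisely here: if some subset $S \subseteq V(T) \setminus R$ has a fixed image $S_0 \subseteq V(G)$ shared by too many embeddings, then adjoining $S_0$ to the root set produces an excessive number of extensions of the smaller rooted tree on $R \cup S$, and the inequality $\rho_S \geq \rho_T$ from balance ensures that the counting threshold required to rule this out is still met in the reduced problem. Iterating this sunflower-style trimming should, for $C$ large in terms of $p$, leave a pairwise internally disjoint subfamily of size $p$. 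The main obstacle is exactly this last step: the intermediate rooted trees $(T, R \cup S)$ obtained after each reduction need not themselves be balanced, and carefully controlling how the extension counts and appropriate density thresholds evolve under successive reductions is delicate. This is presumably why the conjecture remains open in full generality, and why the present paper confines itself to specific rooted trees (cube-like graphs and certain asymmetric $\theta$-configurations) whose trimming step can be carried out by explicit combinatorial analysis.
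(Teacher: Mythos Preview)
The statement is a \emph{conjecture}, and the paper does not prove it in general; it only verifies the two special cases $T=D_s$ (the double star with leaf set as roots) and $T=T_3$ (the $3$-comb). Your proposal is not a proof and, to your credit, does not pretend to be one: you correctly isolate the sunflower-style disjointness extraction as the step where the argument breaks down, and you correctly diagnose that the intermediate rooted trees $(T,R\cup S)$ arising after a reduction need not remain balanced, so the induction does not close. That is precisely the known obstruction.

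It is worth noting, however, that the paper's proofs of the two special cases do \emph{not} follow the ``count injective embeddings of $T$, pigeonhole on $\phi(R)$, then trim'' template you sketch. For $D_s$ the paper instead counts ordered pairs $(M,L)$ of matchings with $M$ an $(s-1)$-matching and $L$ an $s$-matching lying in the neighbourhood graph $N(M)$; the key input is Lemma~\ref{correlated-matchings}, which bounds the number of such pairs that are ``$2t$-correlated'', and this is combined with a K\"onig--Egerv\'ary vertex-cover argument to force the exponent. For the $3$-comb the paper runs a BFS from a fixed vertex and uses the new asymmetric bound $z(m,n,\theta_{3,p})=O((mn)^{2/3}+m+n)$ to control the growth of successive distance levels. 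Neither proof fits into your count-and-sunflower framework, which suggests that even partial progress on Conjecture~\ref{BC-conjecture} may require structural tools tailored to the specific tree rather than a uniform supersaturation argument.
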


Let $D_s$ be the tree obtained by taking two disjoint stars with $s$ leaves and joining the two central vertices by an edge,
and $R$ the set of all the leaves in $D_s$.
It is easy to check that $(D_s,R)$ is balanced with $\rho_{D_s}=\frac{2s+1}{2}$.
Let $T_{s,t}=D_s^R$. We will show that (in Corollary \ref{main-corollary}) that $\ex(n,T_{s,t})=O(n^{2-\frac{2}{2s+1}})$ for all $t\geq s\geq 2$ and
thereby verifying Conjecture \ref{BC-conjecture} for $T=D_s$, $R$ be its set of leaves, making
a first step towards Conjecture \ref{BC-conjecture}.

\subsection{The cube and its generalization}

Let $Q_8$ denote the $3$-dimensional cube, that is, the graph obtained from two vertex-disjoint $C_4$'s by adding a perfect matching
between them.
The well-known cube theorem of Erd\H{o}s and Simonovits \cite{cube} states that
\begin{equation}\label{equ:cube-upper}
\ex(n,Q_8)=O(n^{8/5}).
\end{equation}
Pinchasi and Sharir \cite{PS} gave a new proof of this and extended to the  bipartite setting.
More recently, F\"uredi \cite{furedi-cube} 
showed that $\ex(n,Q_8)\leq n^{8/5}+(2n)^{3/2}$, giving another new proof of the cube theorem.

Pinchasi and Sharir's approach is motivated by certain geometric incidence problems.
In their approach it is more convenient to view $Q_8$ as
a special case of the graph $H_{s,t}$ defined as follows.
Let $t\geq s\geq 2$ be positive integers.
Let $M$ be an $s$-matching $a_1b_1,a_2b_2,\ldots, a_sb_s$,
and $N$ a $t$-matching $c_1d_1,c_2d_2, \ldots, c_t d_t$,
where $M$ and $N$ are vertex disjoint.
Let $H_{s,t}$ be obtained from $M\cup  N$
by adding edges $a_id_j$ and $b_ic_j$ over all
$i\in [s]$ and $j\in [t]$.

Alternatively, we may view $H_{s,t}$ as being obtained from two vertex disjoint
copies of $K_{s,t}$ by adding a matching that joins the two images of every vertex in $K_{s,t}$.
In particular, we see that $Q_8=H_{2,2}$.
In addition to giving a new proof of \eqref{equ:cube-upper},
Pinchasi and Sharir \cite{PS} proved that
if $G$ is an $n$-vertex graph that contains neither a copy of
$H_{s,t}$ nor a copy of $K_{s+1,s+1}$, then $e(G)\leq O(n^{2-2/(2s+1)})$.

\begin{question} \label{PS} {\rm \cite{PS}}
Is it true that for all $t\geq s\geq 2$,
\[\ex(n,H_{s,t})=O(n^{2-\frac{2}{2s+1}})?\]
\end{question}

This was answered affirmatively if $s=t$ in \cite{JN}.

In this paper, we answer Pinchasi and Sharir's question affirmatively as follows.
\begin{theorem}\label{thm:main}
For any $t\geq s\geq 2$, $\ex(n,H_{s,t})=O\left(n^{2-2/(2s+1)}\right)$.
\end{theorem}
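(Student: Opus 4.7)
The plan is to prove the contrapositive: any $n$-vertex graph $G$ with $m:=e(G) \geq Cn^{2-2/(2s+1)}$, for $C=C(s,t)$ sufficiently large, contains a copy of $H_{s,t}$. The main device is an auxiliary bipartite graph on the oriented edges of $G$ combined with the K\H{o}v\'ari--S\'os--Tur\'an theorem. I would form $\mathcal{G}$ with both sides equal to the $2m$-element set $\vec{E}(G)$ of oriented edges of $G$, with $(c,d)$ on the left joined to $(b,a)$ on the right whenever $bc, ad \in E(G)$. Each edge of $\mathcal{G}$ is exactly a labeled $4$-cycle $a$--$b$--$c$--$d$--$a$ of $G$, and a $K_{s,t}$ in $\mathcal{G}$ whose underlying oriented edges have all $2(s+t)$ endpoints pairwise distinct is precisely a copy of $H_{s,t}$ in $G$: the $s$ oriented edges on the left supply the matching $\{a_ib_i\}$, the $t$ on the right the matching $\{c_jd_j\}$, and the $\mathcal{G}$-adjacencies supply the cross edges $b_ic_j$ and $a_id_j$.

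A standard convexity computation gives $\Omega(m^4/n^4)$ labeled $4$-cycles in $G$, with degenerate ones (where $a=c$ or $b=d$) contributing only $O(m^2/n)$, a lower-order term. Hence $e(\mathcal{G}) = \Omega(m^4/n^4)$, and K\H{o}v\'ari--S\'os--Tur\'an on $\mathcal{G}$ (which has $2m$ vertices) finds a $K_{s,T}$ as soon as $e(\mathcal{G})$ exceeds a constant multiple of $T^{1/s} m^{2-1/s}$. The cornerstone arithmetic here is the identity $(2+\tfrac{1}{s})(2-\tfrac{2}{2s+1}) = 4$, which turns the hypothesis into $m^{2+1/s} = C^{2+1/s} n^4$, so $e(\mathcal{G})/m^{2-1/s} = \Omega(C^{2+1/s})$; choosing $C$ large in terms of the required $T$ yields $K_{s,T}$ in $\mathcal{G}$, which in turn gives $s$ oriented edges $\vec{f}_i=(b_i,a_i)$ and $T$ oriented edges $\vec{e}_j=(c_j,d_j)$ of $G$ with all cross edges $b_ic_j,\, a_id_j$ present.

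The main obstacle is the final disjointness step. On the $T$-side the extraction is easy: taking $T$ a sufficiently large constant depending on $s$ and $t$, only $O(s)$ of the $\vec{e}_j$'s can be incident to a prescribed vertex of $\{a_i,b_i\}_i$, so a greedy/pigeonhole argument selects $t$ of them whose $2t$ endpoints form a matching disjoint from the $\vec{f}_i$'s. The delicate point is on the $s$-side: K\H{o}v\'ari--S\'os--Tur\'an leaves no slack to discard any of the $s$ oriented edges $\vec{f}_i$, yet nothing in the argument so far forces them to be pairwise vertex-disjoint or even distinct as unoriented edges. I expect this to be the technical heart of the proof. My plan for overcoming it is either (i) to pre-filter $\mathcal{G}$ by removing oriented edges whose endpoints are incident to anomalously many other oriented edges, absorbing the loss into the $4$-cycle count, or (ii) to invoke supersaturation to produce many copies of $K_{s,T}$ in $\mathcal{G}$ rather than just one, and then, when every copy has some pair $\vec{f}_i,\vec{f}_{i'}$ sharing a vertex, pigeonhole on that shared vertex to expose a large codegree configuration in $G$ from which $H_{s,t}$ can be located by a direct embedding argument. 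Making one of these routes quantitative without sacrificing the exponent is what I foresee as the hardest part.
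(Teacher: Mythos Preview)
Your framework is essentially the Pinchasi--Sharir approach: they also reduced the problem to finding a $K_{s,T}$ in (a version of) your auxiliary graph $\mathcal{G}$, and they also ran into exactly the obstacle you identify, namely that K\H{o}v\'ari--S\'os--Tur\'an gives no room to discard any of the $s$ left-side oriented edges, yet nothing forces them to be pairwise vertex-disjoint. This obstacle is not a technicality; it is precisely why Pinchasi and Sharir could only prove the bound under the extra hypothesis that $G$ is $K_{s+1,s+1}$-free, and why the unconditional statement was left as an open question. Your proposed fix (i) does not help: after passing to a $\lambda$-almost-regular subgraph (Theorem~\ref{almost-regular}) every vertex is incident to roughly the same number of oriented edges, so there is nothing ``anomalous'' to filter out. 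Fix (ii) is not clearly viable either: if many $K_{s,T}$'s in $\mathcal{G}$ all have two left-edges sharing a vertex $v$, what you extract is that $v$ participates in many $4$-cycles, i.e.\ a large $K_{2,q}$- or $K_{s,q}$-type codegree structure around $v$; but such structures are themselves $H_{s,t}$-free, so there is no direct embedding to fall back on. You have correctly located the heart of the problem, but neither route you sketch supplies the missing idea.

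The paper's proof resolves this by changing the double-counting object. Instead of pairs (matching of size $s$, matching of size $T$), it counts pairs $(M,L)$ where $M$ is an $(s-1)$-matching and $L$ is an $s$-matching in the ``neighbourhood graph'' $N(M)$. The asymmetry $s-1$ versus $s$ is the point. For the upper bound, fix $L$: since $G$ is $H_{s,t}$-free, $N(L)$ has matching number at most $t-1$, so by K\"onig--Egerv\'ary it has a vertex cover $Q$ of size at most $t-1$. Any $(s-1)$-matching $M\subseteq N(L)$ must hit $Q$ in every edge; the key new notion is that of a \emph{$2t$-correlated} pair $(M,L)$, meaning some vertex of $M$ has degree $\geq 2t$ in $N(L)$. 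Non-correlated $M$'s must place all $s-1$ edges on the low-degree part of $Q$, giving only $O_{s,t}(1)$ choices for $M$ per $L$, hence at most $O_{s,t}(m^s)$ non-correlated pairs in total. The new Lemma~\ref{correlated-matchings} shows, conversely, that for each $M$ the correlated $L$'s are a negligible fraction of all $s$-matchings in $N(M)$, so the lower bound $\Omega(m^{2s^2-1}/n^{4s^2-4s})$ coming from the $H_{1,s-1}$-count survives. Comparing the two bounds yields $m=O(n^{4s/(2s+1)})$. The replacement of a na\"ive K\H{o}v\'ari--S\'os--Tur\'an step by this correlated/non-correlated dichotomy is exactly what your plan is missing.
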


Note that $T_{s,t}\subseteq H_{s,t}$. Hence, 
by Theorem \ref{BC-lower} we have the following.

\begin{proposition} \label{prop-lower}
There exists a function $\ell$ such that for all $s\geq 2$ and $t\geq \ell(s)$,
$$\ex(n,H_{s,t})\geq \ex(n,T_{s,t})\geq \ex(n,\cT^t_R)\geq \Omega\left(n^{2-2/(2s+1)}\right).$$
\end{proposition}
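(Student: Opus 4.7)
The proposition chains three inequalities together, and each reduces to a direct observation once the definitions are unpacked, so the plan is to verify each in turn.

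For the first inequality $\ex(n,H_{s,t}) \geq \ex(n,T_{s,t})$, I will exhibit $T_{s,t}$ as a subgraph of $H_{s,t}$. In the notation used to define $H_{s,t}$, for each fixed $j \in [t]$ the pair $c_j, d_j$ is joined by the matching edge of $N$, while $c_j$ is adjacent to all $b_i$ and $d_j$ is adjacent to all $a_i$ ($i \in [s]$). This is exactly a copy of $D_s$ with centers $\{c_j,d_j\}$ and leaves $\{a_1,\ldots,a_s,b_1,\ldots,b_s\}$. The $t$ copies obtained as $j$ varies are pairwise vertex-disjoint outside of the common leaf set, which is precisely the description of $T_{s,t} = D_s^R$. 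Consequently any $T_{s,t}$-free graph is also $H_{s,t}$-free, proving the first inequality.

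The second inequality is immediate since $T_{s,t}$ is, by definition, a member of $\cT^t_R$; hence a $\cT^t_R$-free graph is in particular $T_{s,t}$-free.

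For the final asymptotic bound, I will apply Theorem~\ref{BC-lower} to the rooted tree $(D_s,R)$. This requires verifying that $(D_s,R)$ is balanced with $\rho_{D_s} = (2s+1)/2$, which is a quick computation: denoting the two centers by $u$ and $v$, we have $V(D_s) \setminus R = \{u,v\}$ and $|E(D_s)| = 2s+1$, so $\rho_{D_s} = (2s+1)/2$. The only nonempty subsets of $\{u,v\}$ to check are $\{u\}$, $\{v\}$, and $\{u,v\}$; since $e(\{u\}) = e(\{v\}) = s+1$, we obtain $\rho_{\{u\}} = \rho_{\{v\}} = s+1 \geq (2s+1)/2$, and $\rho_{\{u,v\}} = (2s+1)/2$ by design. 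Theorem~\ref{BC-lower} then provides a threshold $\ell(s) = p_0$ such that $\ex(n,\cT^t_R) = \Omega(n^{2-2/(2s+1)})$ whenever $t \geq \ell(s)$, completing the chain. There is no genuine obstacle in this argument, as all the heavy lifting is encapsulated in Theorem~\ref{BC-lower}; the only points requiring care are the explicit identification of $T_{s,t}$ inside $H_{s,t}$ and the routine balance check for $(D_s,R)$.
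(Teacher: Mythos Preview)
Your proposal is correct and follows the same approach as the paper. The paper's own justification is simply the one-line observation that $T_{s,t}\subseteq H_{s,t}$ together with an appeal to Theorem~\ref{BC-lower} (the balance of $(D_s,R)$ having already been noted); you have written out exactly those steps with the details filled in.
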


\noindent Theorem \ref{thm:main} and Proposition \ref{prop-lower} now give

\begin{corollary} \label{main-corollary}
There exists a function $\ell$ such that for all $s\geq 2$ and $t\geq \ell(s)$,
$$\ex(n,H_{s,t})=\Theta(n^{2-2/(2s+1)}) \quad \mbox{and} \quad
\ex(n,T_{s,t}) = \Theta(n^{2-2/(2s+1)}).$$
\end{corollary}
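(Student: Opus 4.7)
The plan is to observe that the corollary is essentially an immediate consequence of combining Theorem~\ref{thm:main} (the upper bound) with Proposition~\ref{prop-lower} (the lower bound), once one records the right monotonicity in the Tur\'an number. Concretely, since $T_{s,t} \subseteq H_{s,t}$ by definition, any $T_{s,t}$-free graph is automatically $H_{s,t}$-free; equivalently, the family of $T_{s,t}$-free graphs is contained in the family of $H_{s,t}$-free graphs, and therefore
\[
\ex(n, T_{s,t}) \;\leq\; \ex(n, H_{s,t}).
\]
This one inequality is the only real ingredient needed beyond the two cited results.

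With this in hand, I would first apply Theorem~\ref{thm:main} to obtain
\[
\ex(n, T_{s,t}) \;\leq\; \ex(n, H_{s,t}) \;=\; O\bigl(n^{2-2/(2s+1)}\bigr),
\]
which supplies the upper bound for both Tur\'an numbers simultaneously, valid for all $t \geq s \geq 2$ (in particular for $t \geq \ell(s)$ once $\ell$ is chosen).

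For the matching lower bound, I would invoke Proposition~\ref{prop-lower} with the same function $\ell$: for $s \geq 2$ and $t \geq \ell(s)$,
\[
\ex(n, H_{s,t}) \;\geq\; \ex(n, T_{s,t}) \;\geq\; \Omega\bigl(n^{2-2/(2s+1)}\bigr).
\]
Sandwiching the upper and lower bounds yields $\ex(n, H_{s,t}) = \Theta(n^{2-2/(2s+1)})$ and $\ex(n, T_{s,t}) = \Theta(n^{2-2/(2s+1)})$, which is the claimed corollary.

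There is no real obstacle here since all the work has been done by Theorem~\ref{thm:main} (the new cube-type upper bound, which is the hard content) and Theorem~\ref{BC-lower} of Bukh and Conlon (which feeds Proposition~\ref{prop-lower}); the only thing to verify is the trivial subgraph containment $T_{s,t} \subseteq H_{s,t}$, which is immediate from the definitions of $T_{s,t} = D_s^R$ and $H_{s,t}$ since the $p = t$ vertex-disjoint-outside-$R$ copies of $D_s$ sitting inside $H_{s,t}$ are visible directly from the construction of $H_{s,t}$ as two disjoint copies of $K_{s,t}$ joined by a matching.
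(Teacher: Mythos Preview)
Your proposal is correct and matches the paper's approach exactly: the paper simply states that the corollary follows from Theorem~\ref{thm:main} and Proposition~\ref{prop-lower} without further argument, and your write-up just spells out the sandwich $\Omega(n^{2-2/(2s+1)}) \leq \ex(n,T_{s,t}) \leq \ex(n,H_{s,t}) \leq O(n^{2-2/(2s+1)})$ that this combination gives. Note that the inequality $\ex(n,T_{s,t}) \leq \ex(n,H_{s,t})$ you justify is already recorded inside Proposition~\ref{prop-lower} itself, so even that step is not new.
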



\subsection{Theta graphs and $3$-comb-pastings}
For the second part of our work, we give another new Tur\'an exponent of $7/5$ for a bipartite graph $S_p$
which we define below.

By a {\it $3$-comb} $T_3$, we denote the tree obtained from a $3$-vertex path $P=abc$ by adding three new vertices $a',b',c'$
and three new edges $aa', bb', cc'$.
For each $p\geq 2$, a {\it $3$-comb-pasting}, denoted by $S_p$, is the graph obtained by first taking $p$ vertex disjoint
copies of $T_3$ and then combining the images of $a'$ into one vertex, the images of $b'$ into one vertex, and the images of $c'$ into one
vertex.

Let $R$ denote the set of leaves of $T_3$.
It is easy to see that $(T_3,R)$ is balanced with density $5/3$,
while the $3$-comb-pasting $S_p$ is just a member in the $p$th power of $(T_3,R)$.
Hence, by Theorem \ref{BC-lower}, there exists $p_0$ such that for all $p\geq p_0$, $\ex(n,S_p)\geq \Omega(n^{7/5})$.

We prove a matching upper bound as follows.

\begin{theorem} \label{thm:3-comb-pasting}
For all $p\geq 2$, it holds that $\ex(n,S_p)=O(n^{7/5})$.
\end{theorem}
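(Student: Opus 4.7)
The plan is to argue by contradiction using a double-counting argument interfaced with the asymmetric $\theta$-graph bound developed earlier in the paper. The key structural observation is that $S_p$ decomposes as a $\theta_{4,p}$ between two vertices $u_{a'},u_{c'}$ — supplied by the $p$ internally disjoint paths $u_{a'}-a_i-b_i-c_i-u_{c'}$ — together with one extra vertex $u_{b'}$ that is adjacent to every middle vertex $b_i$. Finding a copy of $S_p$ therefore reduces to finding a $\theta_{4,p}$ whose $p$ middle vertices share a common neighbor. Assume for contradiction that $G$ is an $n$-vertex $S_p$-free graph with $e(G)\ge Cn^{7/5}$ for a sufficiently large $C=C(p)$.

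By standard reductions, take a bipartite subgraph $G'=(X\cup Y,E')$ with $|X|,|Y|\le n$ and $e(G')=\Omega(n^{7/5})$, then iteratively delete vertices whose degree falls below a threshold of order $n^{2/5}$ to obtain (still denoted $G$) a bipartite graph with minimum degree $\delta=\Omega(n^{2/5})$. From the bipartition of $S_p$, the vertex $u_{b'}$ together with the $a_i,c_i$ lies on one side while $u_{a'},u_{c'}$ together with the $b_i$ lies on the other; we will search for $u_{b'}\in X$ and $u_{a'},u_{c'}\in Y$.

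For each triple of candidate hubs $(u_{b'},u_{a'},u_{c'})\in X\times Y\times Y$ with $u_{a'}\ne u_{c'}$, let $F(u_{b'},u_{a'},u_{c'})$ be the number of ordered triples of distinct vertices $(a,b,c)$ with $b\in N(u_{b'})$, $a\in N(u_{a'})\cap N(b)$, $c\in N(u_{c'})\cap N(b)$. Double-counting over $b\in Y$, using $d(v)\ge\delta$ everywhere, gives
\[
\sum_{(u_{b'},u_{a'},u_{c'})} F(u_{b'},u_{a'},u_{c'}) \;=\; \Omega(n\cdot \delta^5) \;=\; \Omega(n^3).
\]
Since the outer sum has at most $n^3$ terms, some triple $(u_{b'},u_{a'},u_{c'})$ satisfies $F(u_{b'},u_{a'},u_{c'})\ge K$ for any constant $K=K(p)$ that we like, provided $C$ is large enough. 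This produces $K$ (possibly not internally disjoint) $4$-paths $u_{a'}-a_j-b_j-c_j-u_{c'}$ in $G$, all with $b_j\in N(u_{b'})$.

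The final and decisive step is to promote many $4$-paths into $p$ internally vertex-disjoint ones, giving a genuine $\theta_{4,p}$ between $u_{a'}$ and $u_{c'}$ whose $p$ middle vertices all share the neighbor $u_{b'}$. This is where the asymmetric $\theta$-graph bound from earlier in the paper is essential: it is applied to an auxiliary bipartite graph in which one side is constrained to $N(u_{b'})$, so that any $\theta_{4,p}$ produced automatically has its middle layer inside $N(u_{b'})$ — exactly the common middle-neighbor structure required for $S_p$. Once $p$ internally disjoint such $4$-paths are secured between $u_{a'}$ and $u_{c'}$, the minimum-degree hypothesis allows us to complete them to an actual copy of $S_p$ by greedily choosing distinct $a_i\in N(u_{a'})\cap N(b_i)$ and $c_i\in N(u_{c'})\cap N(b_i)$. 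The main obstacle is exactly this disjointness step: a naive path count only gives possibly overlapping paths, and the asymmetric $\theta$-bound is precisely the tool needed to convert raw path-counts into an internally disjoint $\theta$-structure in a bipartite graph whose middle layer is confined to a prescribed vertex set.
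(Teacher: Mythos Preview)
Your averaging step is fine: with minimum degree $\delta\ge Cn^{2/5}$ you do get a triple $(u_{b'},u_{a'},u_{c'})$ with $F\ge K$ for any desired constant $K$. The gap is entirely in the ``decisive step.'' Having $K$ (ordered) $4$-paths $u_{a'}\text{--}a\text{--}b\text{--}c\text{--}u_{c'}$ with $b\in N(u_{b'})$ does not yield $p$ internally disjoint ones; all $K$ paths could run through a single middle vertex $b$, or through a single $a\in N(u_{a'})$, so no $\theta_{4,p}$ is forced between these two prescribed endpoints. The asymmetric $\theta$-bound does not rescue this: it produces a $\theta_{k,p}$ \emph{somewhere} in a bipartite graph, not between two specified vertices. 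If instead you try to apply the $\theta_{4,p}$ bound to the bipartite graph $G[X',N(u_{b'})]$ (with $X'=N(N(u_{b'}))$), two problems arise. First, under almost-regularity $|N(u_{b'})|\asymp \delta$, $|X'|\lesssim \delta^2$, and $e(G[X',N(u_{b'})])\asymp \delta^2$, while the $k=4$ threshold $c\bigl(m^{3/4}n^{1/2}+m+n\bigr)$ with $m=\delta$, $n=\delta^2$ is also $\asymp \delta^2$; you are exactly at the boundary and cannot conclude. Second, even if a $\theta_{4,p}$ were found, its two endpoints (and hence its middle vertices) lie in one part of the bipartition, but you have no control over which part; if they land in $X'$ then the middle vertices are in $X'$, not in $N(u_{b'})$, and the construction collapses. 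Your final sentence about ``greedily choosing distinct $a_i,c_i$'' also makes no sense once a $\theta_{4,p}$ is in hand, since $a_i,c_i$ are already determined by the $\theta_{4,p}$.

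The paper's proof is structurally different. It passes to the supergraph $L_3(\theta_{3,p})\supseteq S_p$ and runs a BFS from a minimum-degree vertex $x$, with levels $L_0,L_1,L_2,L_3$. The asymmetric bound used is for $\theta_{3,p}$ (odd $k$), not $\theta_{4,p}$. The argument shows $G_1[L_1,L_2^+]$ is $\theta_{3,p}$-free (where $L_2^+$ is the set of $L_2$-vertices with many back-neighbours in $L_1$), giving $|L_2|\gtrsim d^2$. Then $H=G_1[L_2,L_3]$ is partitioned according to ``rich'' pairs relative to the BFS-tree cells $S_i$, and each piece is shown to be $\theta_{3,q}$-free for some $q=q(p)$. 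Corollary~\ref{cor:theta3p} then forces $|L_3|\gtrsim d^{5/2}>n$, a contradiction. The moral is that the right auxiliary object is $\theta_{3,p}$ between consecutive BFS levels, not $\theta_{4,p}$ inside a single neighbourhood.
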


\begin{corollary}
There exists a positive integer $p_0$ such that for all $p\geq p_0$, it holds that
\[\ex(n,S_p)=\Theta(n^{7/5}).\]
\end{corollary}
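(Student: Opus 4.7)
The plan is to combine the upper bound of Theorem~\ref{thm:3-comb-pasting} with a matching lower bound derived from the Bukh--Conlon theorem (Theorem~\ref{BC-lower}). The upper bound $\ex(n, S_p) = O(n^{7/5})$ is immediate from Theorem~\ref{thm:3-comb-pasting} and holds for every $p \geq 2$, so all the work is on the lower side.

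For the lower bound, I would first verify the two facts already flagged in the paragraph preceding Theorem~\ref{thm:3-comb-pasting}. Writing $T_3$ on vertices $a,b,c,a',b',c'$ with edges $ab, bc, aa', bb', cc'$ and root set $R = \{a',b',c'\}$, the non-root set $V(T_3)\setminus R = \{a,b,c\}$ has $3$ vertices and all $5$ edges of $T_3$ are incident to it, giving $\rho_{T_3} = 5/3$. A short case check over the $2^3 - 1 = 7$ nonempty subsets $S \subseteq \{a,b,c\}$ shows $\rho_S \geq 5/3$ in each case (e.g., any singleton contributes at least $2$ edges, any pair contributes at least $4$), so $(T_3, R)$ is balanced. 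Second, by construction $S_p$ is obtained from $p$ vertex-disjoint copies of $T_3$ by identifying all the copies of $a'$, all the copies of $b'$, and all the copies of $c'$; this is precisely one member of the family $\cT^p_R$.

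Applying Theorem~\ref{BC-lower} to the balanced rooted tree $(T_3, R)$ then yields a $p_0$ such that for every $p \geq p_0$,
\[
\ex(n, S_p) \;\geq\; \ex(n, \cT^p_R) \;=\; \Omega\!\left(n^{2 - 1/\rho_{T_3}}\right) \;=\; \Omega\!\left(n^{2 - 3/5}\right) \;=\; \Omega(n^{7/5}).
\]
Combining this with Theorem~\ref{thm:3-comb-pasting} gives $\ex(n, S_p) = \Theta(n^{7/5})$ for all $p \geq p_0$.

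In this case there is essentially no obstacle within the corollary itself: both required ingredients (the upper bound from Theorem~\ref{thm:3-comb-pasting} and the lower bound machinery from Theorem~\ref{BC-lower}) are already in place, and the only things one must verify are the density and balancedness of $(T_3, R)$ and the containment $S_p \in \cT^p_R$, both of which are routine. The genuine difficulty of the $7/5$ exponent is concentrated in the proof of Theorem~\ref{thm:3-comb-pasting}, not in this corollary.
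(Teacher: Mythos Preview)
Your proposal is correct and follows essentially the same approach as the paper: the paper derives the lower bound from Theorem~\ref{BC-lower} after noting that $(T_3,R)$ is balanced with density $5/3$ and that $S_p$ lies in $\cT^p_R$, and then combines this with the upper bound of Theorem~\ref{thm:3-comb-pasting}. Your added verification of balancedness and of the containment $S_p\in\cT^p_R$ simply spells out what the paper asserts as ``easy to see.''
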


A key step in the proof of Theorem \ref{thm:3-comb-pasting} is to study Tur\'an numbers of theta graphs in the bipartite setting,
which continue the line of work of Faudree and Simonovits \cite{FS} and of Naor and Verstraete \cite{NV} and may be of independent interest.

Given a family $\cH$ of graphs and positive integers $m, n$,
the {\it asymmetric bipartite Tur\'an number} $z(m,n,\cH)$ of $\cH$ denote
the maximum number of edges in an $m$ by $n$ bipartite graph that does not contain any member of $\cH$
as a subgraph. If $\cH$ has just one member $H$, we write $z(m,n,H)$ for $z(m,n,\{H\})$.
The function $z(m,n, C_{2k})$ had been studied in the context of number theoretic problems and geometric problems.
Naor and Verstra\"ete \cite{NV} proved that for $m\le n$ and $k\ge 2$,
\begin{equation*}\label{equ:z(C2k)-0}
z(m,n,C_{2k})\le \left\{\begin{array}{ll}
    (2k-3)\cdot [(mn)^{\frac{k+1}{2k}}+m+n] & \text{if } k \text{ is odd}, \\
    (2k-3)\cdot [m^{\frac{k+2}{2k}}n^{\frac{1}{2}}+m+n] & \text{if } k \text{ is even}.
  \end{array}\right.
\end{equation*}
A different form of upper bounds on $z(m,n,C_{2k})$ can be found in \cite{JM}.

Recall the theta graph $\theta_{k,p}$, that is the graph consisting of the union of
$p$ internally disjoint paths of length $k$ joining a pair of vertices.
In particular, $\theta_{k,2}=C_{2k}$.
The following result can be viewed as a common generalization of the results in \cite{FS,NV}.
\begin{theorem}  \label{thm:asymm-theta}
Let $m,n, k,p\geq 2$ be integers, where $m\leq n$. There exists a positive constant $c=c(k,p)$ such that
\begin{equation*}
z(m,n,\theta_{k,p})\le \left\{\begin{array}{ll}
    c\cdot [(mn)^{\frac{k+1}{2k}}+m+n] & \text{if } k \text{ is odd}, \\
    c\cdot [m^{\frac{k+2}{2k}}n^{\frac{1}{2}}+m+n] & \text{if } k \text{ is even}.
  \end{array}\right.
\end{equation*}
\end{theorem}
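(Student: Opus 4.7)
The plan is to follow the path-counting strategy of Bondy--Simonovits and Naor--Verstra\"ete, adapted to the bipartite asymmetric setting with $\theta_{k,p}$ replacing the classical $C_{2k}$. Let $G$ be a $\theta_{k,p}$-free bipartite graph with parts $A$ and $B$ of sizes $m \le n$ and $e$ edges. We may assume $e \ge C_0(k,p)(m+n)$ for a sufficiently large constant $C_0$, since otherwise the conclusion follows from the additive term $m+n$.

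First I would prune $G$ to a subgraph $G'$ with $e(G') \ge e/2$ in which every vertex of $A' := A \cap V(G')$ has degree at least $d_1 := e/(4m)$ and every vertex of $B' := B \cap V(G')$ has degree at least $d_2 := e/(4n)$, via the usual iterative removal of low-degree vertices. Choosing $C_0$ large ensures $d_1, d_2 \ge 4k$. For each $v \in A'$, a greedy path-extension then produces at least $c(k) \cdot d_1^{\lceil k/2 \rceil} d_2^{\lfloor k/2 \rfloor}$ length-$k$ paths of $G'$ starting at $v$: at step $i$ one must avoid at most $\lceil i/2 \rceil \le k$ previously visited vertices on the destination side, leaving at least half of the $d_1$ or $d_2$ available neighbors. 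Summing over $v \in A'$ yields a lower bound of $|A'| \cdot c(k) \cdot d_1^{\lceil k/2 \rceil} d_2^{\lfloor k/2 \rfloor}$ on the number of length-$k$ paths of $G'$ with a designated endpoint in $A'$.

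For the matching upper bound, I invoke a path-multiplicity lemma: in any $\theta_{k,p}$-free graph, the number of length-$k$ paths between any two fixed vertices is at most a constant $F = F(k,p)$. Since $G'$ is bipartite, length-$k$ paths with a designated endpoint in $A'$ have the other endpoint in $B'$ when $k$ is odd and in $A'$ when $k$ is even, so their number is at most $F \cdot |A'| \cdot |B'|$ or at most $F \cdot |A'|^2$ respectively. Cancelling the common factor $|A'|$ against the lower bound, and using $|A'| \le m$, $|B'| \le n$ together with $d_1 = e/(4m)$ and $d_2 = e/(4n)$, straightforward algebra gives $e = O((mn)^{(k+1)/(2k)})$ when $k$ is odd, and $e = O(m^{(k+2)/(2k)} n^{1/2})$ when $k$ is even. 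It is important to start the count at the smaller side $A'$: starting at $B'$ would instead produce the inferior bound $m^{1/2} n^{(k+2)/(2k)}$ when $k$ is even, which is the source of the asymmetry in the theorem.

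The main obstacle is establishing the path-multiplicity lemma. A naive application of the sunflower lemma to the $(k-1)$-element sets of internal vertices fails, because a sunflower with a non-empty core does not yield internally disjoint paths. The proof instead proceeds by greedy extraction: pick pairwise internally disjoint length-$k$ $u$-$v$ paths one by one; if $p$ are produced, a $\theta_{k,p}$ is obtained, contradicting the hypothesis. Otherwise there are at most $p-1$ such paths, and every remaining $u$-$v$ length-$k$ path must pass through one of the at most $(p-1)(k-1)$ internal vertices of the extracted family. Iterating pigeonhole on which such vertex $w$ is used and at which position $w$ sits along the path, and carefully arguing that the resulting $u$-$w$ and $w$-$v$ subpaths can be recombined into $p$ internally disjoint length-$k$ $u$-$v$ paths whenever the total count is too large, eventually bounds the multiplicity by a constant $F(k,p)$. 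Making this recombination work cleanly, so that the shared prefix and suffix are actually disjoint where needed, is the most delicate technical point.
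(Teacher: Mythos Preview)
Your strategy has the right arithmetic at the end, but the path-multiplicity lemma it rests on is \emph{false}: the number of length-$k$ paths between two fixed vertices in a $\theta_{k,p}$-free bipartite graph need not be bounded by any function of $k$ and $p$. Take $K_{2,N}$ with $k=3$: one part has only two vertices, so the graph is $C_6$-free (hence $\theta_{3,p}$-free for every $p\ge 2$), yet between $a$ and any $j\in[N]$ there are $N-1$ paths $a,i,b,j$ of length $3$. For even $k$ the same phenomenon occurs with endpoints on the small side: $K_{3,N}$ is $C_8$-free, but between any two of $a_1,a_2,a_3$ there are $N(N-1)$ paths of length $4$. Your greedy-extraction sketch cannot be repaired: once pigeonhole forces many paths through a common internal vertex $w$ at a fixed position, every recombined $u$--$v$ path still passes through $w$, so no two of them are internally disjoint; and you cannot induct on the shorter $u$--$w$ and $w$--$v$ segments because the graph is not assumed to be $\theta_{j,p}$-free for $j<k$. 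These particular examples are sparse and would be absorbed by your additive $m+n$ term, but they show that the lemma you invoke as a black box simply does not hold, so the upper bound $F\cdot|A'|\cdot|B'|$ (or $F\cdot|A'|^2$) on the path count is unjustified.

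The paper avoids path counting entirely. It runs BFS from a vertex $x\in A$ and proves a level-growth lemma by induction on the height $t$: if $T$ is a height-$t$ tree rooted at $x$ with bottom level $A$, and $G$ is a bipartite graph between $A$ and a disjoint set $B$ with $T\cup G$ being $\theta_{k,p}$-free, then $e(G)\le 2ktp^{t}(|A|+|B|)$. The inductive step embeds a spider in the dense bipartite layer and closes it to a $\theta_{k,p}$ using the tree paths back to $x$ (the Faudree--Simonovits blowup idea). This lemma forces $|L_i|/|L_{i-1}|\gtrsim d_i$, where $d_i$ alternates between $e/m$ and $e/n$; after $k$ levels the product $\prod_{i=1}^k d_i$ makes $|L_k|$ exceed $n$ when $k$ is odd and $m$ when $k$ is even, yielding exactly the two stated exponents. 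The asymmetry for even $k$ thus comes from the parity of the terminal level of the BFS, not from a pairwise multiplicity bound.
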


The rest of the paper is organized as follows. In Section 2, we give some preliminaries.
In Section 3, we prove Theorem \ref{thm:main}.
In Section 4, we prove Theorem \ref{thm:asymm-theta}.
In Section 5, we prove Theorem \ref{thm:3-comb-pasting}.


\section{Preliminaries} \label{Preli}
In this section we present some of the auxiliary lemmas which are used in the proofs of main results. The first three are  folklore, the  proofs of the other two can also be found in \cite{JN}.

\begin{lemma}\label{prop:maxcut}
If $G$ is a graph with average degree $d$ then it contains a bipartite subgraph $G_1$ with $e(G_1)\geq \frac12 e(G)$ and a subgraph $G_2$ with minimum degree $\delta(G_2)\geq \frac{1}{2}d$.
\end{lemma}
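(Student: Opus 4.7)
The plan is to prove the two assertions independently, as they are standard and essentially decoupled.

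For the bipartite subgraph, I would use a max-cut argument. Take a partition $V(G)=A\cup B$ that maximizes the number of edges between $A$ and $B$, and call the resulting bipartite subgraph $G_1$. If some vertex $v\in A$ had strictly more neighbors inside $A$ than in $B$, then moving $v$ to $B$ would strictly increase the number of crossing edges, contradicting maximality. Hence every vertex has at least half of its $G$-neighbors on the opposite side of the partition. Summing this inequality over all vertices and dividing by $2$ yields $e(G_1)\ge \tfrac{1}{2}e(G)$. (A random-bipartition argument giving the same bound in expectation would work equally well.)

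For the minimum-degree subgraph, I would use an iterative pruning procedure. Starting from $G_0=G$, while some vertex $v$ has degree (in the current graph) strictly less than $\tfrac{d}{2}$, delete $v$ and continue. Let $G_2$ be the graph produced when the process halts. By construction, every vertex of $G_2$ has degree at least $\tfrac{d}{2}$, so it suffices to verify that $G_2$ is nonempty. The accounting goes as follows: each deletion removes strictly fewer than $\tfrac{d}{2}$ edges, so if the process stripped away all $n$ vertices of $G$, it would have destroyed fewer than $\tfrac{d}{2}\cdot n = e(G)$ edges in total, which is impossible since every edge of $G$ must eventually be removed. Hence the process terminates before exhausting the vertex set.

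There is no real obstacle here — both arguments are folklore and the only place to be slightly careful is the edge-counting in the pruning step, where one must use the identity $e(G)=\tfrac{dn}{2}$ to obtain the contradiction. I would therefore present the proof compactly, stating the max-cut inequality vertex by vertex for the first part and the strict edge-loss bookkeeping for the second part.
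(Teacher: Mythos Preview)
Your proposal is correct; both the max-cut argument for $G_1$ and the iterative low-degree deletion for $G_2$ are the standard folklore proofs, and your edge-counting in the pruning step is handled properly. The paper itself gives no proof of this lemma---it explicitly labels it as folklore---so there is nothing further to compare.
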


\begin{lemma} \label{lem:min-degree}
Let $G$ be a bipartite graph with a bipartition $(A,B)$. Let $d_A=e(G)/|A|$ and $d_B=e(G)/|B|$.
There exists a subgraph $G'$ of $G$ with $e(G')\geq \frac12 e(G)$ such that each vertex in
$V(G')\cap A$ has degree at least $\frac14 d_A$ in $G'$ and each vertex in $V(G')\cap B$ has
degree at least $\frac14 d_B$ in $G'$.
\end{lemma}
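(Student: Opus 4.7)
The plan is a standard greedy pruning: iteratively delete vertices that violate the required minimum degree, and show that the total edge loss is at most half of $e(G)$. Concretely, I would initialize $G_0=G$, and while there exists a vertex $v\in V(G_i)\cap A$ with $\deg_{G_i}(v)<\tfrac{1}{4}d_A$ or a vertex $v\in V(G_i)\cap B$ with $\deg_{G_i}(v)<\tfrac{1}{4}d_B$, I would delete one such $v$ together with its incident edges to form $G_{i+1}$. Let $G'$ denote the (finite) graph produced when the process halts. By the stopping condition, every vertex of $V(G')\cap A$ has degree at least $\tfrac{1}{4}d_A$ in $G'$ and every vertex of $V(G')\cap B$ has degree at least $\tfrac{1}{4}d_B$ in $G'$, which is the minimum-degree property claimed.

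Next I would bound $e(G')$ from below by splitting the set of deleted edges according to which side pruned them. At most $|A|$ vertices of $A$ can ever be deleted, and each such deletion removes strictly fewer than $\tfrac{1}{4}d_A$ edges from the current graph (by the rule used to select it). Hence the total number of edges removed during $A$-side prunings is strictly less than $|A|\cdot \tfrac{1}{4}d_A=\tfrac{1}{4}|A|\cdot \tfrac{e(G)}{|A|}=\tfrac{1}{4}e(G)$. The symmetric argument on the $B$-side gives another contribution of less than $\tfrac{1}{4}e(G)$. Adding the two contributions yields $e(G)-e(G')<\tfrac{1}{2}e(G)$, so $e(G')\geq \tfrac{1}{2}e(G)$, completing the proof.

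There is essentially no obstacle here, since this is a textbook minimum-degree pruning argument. The only point that deserves mild care is that the thresholds $\tfrac{1}{4}d_A$ and $\tfrac{1}{4}d_B$ are fixed up front (they are defined from the original $e(G)$, $|A|$, $|B|$) and are \emph{not} recomputed as the graph shrinks; this is precisely what lets the edge-loss bound $\sum_{\text{deleted }v\in A}\deg(v)<|A|\cdot \tfrac{1}{4}d_A$ go through without any subtlety about the average degree changing during the process.
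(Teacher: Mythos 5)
Your proof is correct and is the standard greedy pruning argument; the paper explicitly labels this lemma as folklore and gives no proof of its own, so there is nothing to compare against beyond confirming that your argument is the canonical one. One small point worth noting (which you implicitly handle via the strict inequalities): the edge-loss bound $e(G)-e(G')<\tfrac12 e(G)$ also shows the pruning process cannot exhaust the whole graph when $e(G)>0$, so $G'$ is nonempty and the minimum-degree conclusion is non-vacuous.
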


\begin{lemma} \label{tree-embed}
Let $k$ be a positive integer and $T$ be a rooted tree with $k$ edges. If $G$ is a graph with minimum degree at least $k$ and $v$ is any vertex in $G$, then $G$ contains a copy of $T$ rooted at $v$.
\end{lemma}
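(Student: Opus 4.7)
The plan is to prove this by a straightforward greedy embedding, walking through the tree $T$ in an order compatible with its rooted structure. Equivalently, one can view it as induction on $k$, the number of edges of $T$. I will describe it in greedy form since the bookkeeping is most transparent there.

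Let $r$ be the root of $T$ and enumerate the vertices of $T$ as $x_0, x_1, \ldots, x_k$ where $x_0 = r$ and, for each $i \geq 1$, the vertex $x_i$ has a unique neighbor $x_{p(i)}$ with $p(i) < i$ (such an ordering exists because $T$ is a tree: take any BFS or DFS ordering from $r$, so that each $x_i$ with $i \geq 1$ has its parent $x_{p(i)}$ appearing earlier). I will construct an embedding $\phi : V(T) \to V(G)$ by setting $\phi(x_0) := v$ and defining $\phi(x_i)$ sequentially so that at each step $\phi(x_i)$ is a neighbor of $\phi(x_{p(i)})$ in $G$ that is distinct from all previously chosen images.

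The only thing to check is that each greedy step succeeds. Suppose $\phi(x_0), \ldots, \phi(x_{i-1})$ have been defined and are pairwise distinct. I want to pick $\phi(x_i)$ among the neighbors of $\phi(x_{p(i)})$ in $G$, avoiding the set $\{\phi(x_0), \ldots, \phi(x_{i-1})\} \setminus \{\phi(x_{p(i)})\}$, which has $i-1$ elements. Since $\delta(G) \geq k$, the vertex $\phi(x_{p(i)})$ has at least $k$ neighbors in $G$, all of which are different from $\phi(x_{p(i)})$ itself. Thus the number of admissible choices for $\phi(x_i)$ is at least $k - (i-1) \geq 1$ for every $i \leq k$, so a valid choice always exists.

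By the construction, $\phi$ is injective and $\phi(x_i)\phi(x_{p(i)}) \in E(G)$ for all $i \geq 1$, giving the $k$ required edges. Hence $\phi$ is a copy of $T$ in $G$ rooted at $v$. The only step with any real content is the counting argument that the greedy process never gets stuck, and this is immediate from $\delta(G) \geq k$; there is no genuine obstacle.
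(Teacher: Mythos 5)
Your proof is correct; it is the standard greedy (equivalently, inductive) embedding argument for this folklore lemma. The paper itself does not supply a proof — it lists this among results that are "folklore" — and what you wrote is precisely the argument one would expect: order the tree so each non-root vertex follows its parent, embed greedily, and observe that at step $i \le k$ the image of the parent has at least $k$ neighbors while only $i-1$ of them can be blocked, so the process never stalls.
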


\begin{lemma}[\cite{JN}, Lemma 5.3] \label{matching-count}
Let $t$ be a positive integer and $G$ be an $n$-vertex bipartite graph with at least $4t n$ edges. Then the number of $t$-matchings in $G$ is at least $\frac{e(G)^t}{2^tt!}$.
\end{lemma}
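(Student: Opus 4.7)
The plan is to prove the lemma by induction on $t$. The base case $t=1$ is immediate, since every single edge is a $1$-matching, so $M_1(G) = e(G) \geq e(G)/2$, where $M_t(G)$ denotes the number of $t$-matchings in $G$.

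For the inductive step, I would invoke the standard double-counting identity
\[
t \cdot M_t(G) \;=\; \sum_{uv \in E(G)} M_{t-1}(G - \{u,v\}),
\]
where $G - \{u,v\}$ is the subgraph on $n-2$ vertices obtained by deleting both endpoints of the edge $uv$. This identity simply records that each $t$-matching has $t$ edges, and removing any one of them leaves a $(t-1)$-matching in the corresponding vertex-deleted subgraph.

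The key observation is that bipartiteness gives a very clean bound on $d(u)+d(v)$: if $G$ has parts $A$ and $B$ with $|A|+|B|=n$, then for any edge $uv$ with $u\in A,\, v\in B$, one has $d(u)+d(v)\le |B|+|A|=n$. Hence
\[
e(G-\{u,v\}) \;=\; e(G) - d(u) - d(v) + 1 \;\geq\; e(G) - n.
\]
A direct calculation using $e(G)\geq 4tn$ verifies that $e(G)-n\geq 4(t-1)(n-2)$, so the graph $G-\{u,v\}$ satisfies the hypothesis needed to apply the inductive assumption, giving
\[
M_{t-1}(G-\{u,v\}) \;\geq\; \frac{(e(G)-n)^{t-1}}{2^{t-1}(t-1)!}.
\]
Summing over the $e(G)$ edges and dividing by $t$ yields
\[
M_t(G) \;\geq\; \frac{e(G)\,(e(G)-n)^{t-1}}{2^{t-1}\,t!}.
\]

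To reach the target bound $e(G)^t/(2^t t!)$, it suffices to show $(1-n/e(G))^{t-1}\geq 1/2$. Since $n/e(G)\leq 1/(4t)$ by hypothesis, Bernoulli's inequality gives
\[
\left(1-\tfrac{1}{4t}\right)^{t-1} \;\geq\; 1-\tfrac{t-1}{4t} \;=\; \tfrac{3t+1}{4t} \;>\; \tfrac{1}{2},
\]
which closes the induction. There is no real obstacle to this argument; the only subtlety is the bookkeeping to confirm that the factor $(1-n/e(G))^{t-1}$ closes precisely the gap between the $2^{t-1}$ from induction and the desired $2^t$, and it is the bipartite bound $d(u)+d(v)\leq n$ (rather than the weaker $\leq 2n$ available in a general graph) that leaves enough slack for this closure.
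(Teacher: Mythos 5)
Your proof is correct, and since the paper cites this lemma from \cite{JN} without reproducing the argument, there is no in-paper proof to compare against. The double-counting identity $t\,M_t(G)=\sum_{uv\in E(G)} M_{t-1}(G-\{u,v\})$ is valid, the bipartite bound $d(u)+d(v)\le n$ for $uv\in E(G)$ is the right observation, the verification that $e(G)-n\ge 4(t-1)(n-2)$ (so the inductive hypothesis applies to $G-\{u,v\}$) checks out, and the Bernoulli estimate $(1-\tfrac{1}{4t})^{t-1}\ge \tfrac{3t+1}{4t}>\tfrac12$ closes the factor of $2$ correctly. For what it is worth, the same idea is usually presented without induction, as a direct greedy count: choose the $t$ edges of a matching one at a time, noting that after $i$ choices the deletion of their $2i$ endpoints removes at most $in$ edges by bipartiteness, and $in\le(t-1)n< e(G)/4$; so at each step at least $e(G)/2$ edges remain available, giving at least $(e(G)/2)^t$ ordered $t$-matchings, and dividing by $t!$ finishes. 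Your inductive formulation is essentially this argument with extra bookkeeping; the greedy version buys a shorter write-up by not having to re-verify the density hypothesis at each level, while your version makes the recursive structure explicit. Either is fine.
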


\begin{lemma} [\cite{JN}, Lemma 5.5] \label{H1t-count}
Let $t$ be a positive integer and $G$ be an $n$-vertex bipartite graph with a bipartition $(A,B)$.
Suppose $G$ has at least $4\sqrt{2t} n^{3/2}$ edges. Then the number
of $H_{1,t}$'s in $G$ is at least $$\frac{1}{2^{5t+2} t!}\cdot \frac{e(G)^{3t+1}}{|A|^{2t}|B|^{2t}}.$$
\end{lemma}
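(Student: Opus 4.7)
The plan is to count copies of $H_{1,t}$ by decomposing each copy as a central edge $a_1b_1$ together with $t$ internally disjoint paths of length $3$ joining its endpoints. After deleting the two end-edges $a_1d_j$ and $b_1c_j$ of each such path, one is left with a single ``link edge'' $c_jd_j$, so the $t$ length-$3$ paths collectively form a $t$-matching in a natural bipartite link graph.

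First, I would apply Lemma~\ref{lem:min-degree} to pass to a subgraph $G'\subseteq G$ with $e(G')\geq e(G)/2$ and minimum degree at least $e(G)/(4|A|)$ on $A$ and $e(G)/(4|B|)$ on $B$. For each edge $ab\in E(G')$ with $a\in A$, $b\in B$, define the \emph{link graph} $L(a,b)$ to be the bipartite subgraph of $G'$ on parts $N_{G'}(b)\setminus\{a\}\subseteq A$ and $N_{G'}(a)\setminus\{b\}\subseteq B$. Since the central edge of $H_{1,t}$ is the unique edge both of whose endpoints have degree $t+1$, each copy of $H_{1,t}$ in $G'$ determines a unique central edge, and hence
\[
\#\{H_{1,t}\text{-copies in }G'\}\;=\;\sum_{ab\in E(G')}M_t\bigl(L(a,b)\bigr),
\]
where $M_t(\cdot)$ is the number of $t$-matchings.

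Second, Lemma~\ref{matching-count} gives $M_t(L(a,b))\geq e(L(a,b))^t/(2^tt!)$ whenever $e(L(a,b))\geq 4t\,|V(L(a,b))|$. A direct double-count shows $\sum_{ab\in E(G')}e(L(a,b))=4\,C_4(G')$, since each $4$-cycle of $G'$ contributes once per choice of its four edges as the ``central'' one. A standard convexity bound on codegrees yields $C_4(G')=\Omega(e(G')^4/(|A|^2|B|^2))$, and the hypothesis $e(G)\geq 4\sqrt{2t}\,n^{3/2}$ absorbs the lower-order corrections. Applying Jensen's inequality to the convex map $x\mapsto x^t$ over the edges of $G'$ then gives
\[
\sum_{ab\in E(G')}e(L(a,b))^t\;\geq\;e(G')\cdot\!\left(\frac{\sum_{ab}e(L(a,b))}{e(G')}\right)^{\!t}\;=\;\Omega\!\left(\frac{e(G)^{3t+1}}{|A|^{2t}|B|^{2t}}\right),
\]
and dividing by $2^tt!$ yields the claimed order of magnitude.

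The main obstacle I expect is handling the ``bad'' edges $ab$ for which the threshold of Lemma~\ref{matching-count} fails, i.e.\ where $e(L(a,b))<4t\,|V(L(a,b))|$. A crude estimate bounds their total contribution to $\sum_{ab}e(L(a,b))$ by $O(t)\sum_v\deg_{G'}(v)^2$, and the hypothesis $e(G)\geq 4\sqrt{2t}\,n^{3/2}$ together with the minimum-degree guarantee of Lemma~\ref{lem:min-degree} can be used to show this is at most half the total, so the ``good'' edges dominate the Jensen estimate. Careful bookkeeping of the constant losses---factor $\tfrac12$ from Lemma~\ref{lem:min-degree}, a constant from the $C_4$ convexity bound, and $1/(2^tt!)$ from Lemma~\ref{matching-count}---should yield precisely the factor $1/(2^{5t+2}t!)$ in the denominator.
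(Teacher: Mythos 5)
Your decomposition (central edge $ab$, link graph $L(a,b)$, $t$-matchings therein) is the natural one and the identities you use — $\sum_{ab}e(L(a,b))=4C_4(G')$, the convexity lower bound $C_4(G')=\Omega(e(G')^4/(|A|^2|B|^2))$, Jensen for $x\mapsto x^t$ — are all correct. A small point: the equality ``$\#H_{1,t}=\sum_{ab}M_t(L(a,b))$'' only holds for $t\ge 2$; for $t=1$ each $C_4$ is counted four times, so you only get an inequality, which happens to be the direction you need.

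The genuine gap is the treatment of the ``bad'' edges, which is exactly the crux and is waved away. You bound the bad contribution by $4t\sum_v d_{G'}(v)^2$ and assert that ``the hypothesis together with the minimum-degree guarantee of Lemma~\ref{lem:min-degree}'' shows this is at most half of $4C_4(G')$. The min-degree guarantee on its own does nothing here: Lemma~\ref{lem:min-degree} gives no upper bound on $\Delta(G')$, so the only a priori control is $\sum_v d_{G'}(v)^2\le n\,e(G')$, and at the threshold $e(G)\asymp\sqrt{t}\,n^{3/2}$ this gives $4t\sum d^2 = O(t^{3/2}n^{5/2})$, while $2C_4(G')$ need only be $\Omega(t^2n^2)$ — so the crude bound fails for $n\gg t$. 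What is actually required is a convexity comparison through the cherry counts: writing $\sum_a d(a)^2=2P_A+e(G')$ with $P_A=\sum_a\binom{d(a)}{2}$, one must use $C_4(G')\ge\Omega(P_A^2/|B|^2)$ (and symmetrically for $P_B$) and then $P_A\ge\Omega(e(G')^2/|A|)$ to show $4t\sum d^2\le 2C_4(G')$. With $e(G')\ge 2\sqrt{2t}\,n^{3/2}$ and $|A||B|^2,|A|^2|B|\le\tfrac{4}{27}n^3$ the constants just barely close, but none of this is in your sketch, nor is the explicit factor $1/(2^{5t+2}t!)$ tracked. As written, the proposal identifies the right skeleton but leaves the one step that carries all the weight unproven.
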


We also need the following regularization theorem of Erd\H{o}s and Simonovits which is an important tool for Tur\'an-type problems of sparse graphs. Recently, the first and third author have developed a version of this result for  linear hypergraphs~\cite{JY}. For a positive real $\lambda$, $G$ is  called {\it $\lambda$-almost-regular} if $\Delta(G)\leq \lambda\cdot \delta(G)$.

\begin{theorem}{\rm (\cite{cube})} \label{almost-regular}
Let $\alpha$ be any real in $(0,1)$, $\lambda=20\cdot 2^{(1/\alpha)^2}$,
and $n$ be a sufficiently large integer depending only on $\alpha$.
Suppose $G$ is an $n$-vertex graph with $e(G)\geq n^{1+\alpha}$.
Then $G$ has a $\lambda$-almost-regular subgraph on $m$ vertices,
where $m>n^{\alpha\frac{1-\alpha}{1+\alpha}}$ such that
$e(G')>\frac{2}{5} m^{1+\alpha}$.  \qed
\end{theorem}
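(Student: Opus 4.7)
The plan is to prove this regularization theorem by an extremal (minimum-counterexample) argument rather than a purely iterative one. Set $\beta := \alpha(1-\alpha)/(1+\alpha)$ and consider the family $\mathcal{F}$ of subgraphs $H$ of $G$ satisfying $|V(H)| \geq n^{\beta}$ and $e(H) \geq \frac{2}{5}|V(H)|^{1+\alpha}$. Since $G$ itself is in $\mathcal{F}$, the family is nonempty. Choose $H^{*} \in \mathcal{F}$ with $m := |V(H^{*})|$ minimum; I claim $H^{*}$ is $\lambda$-almost-regular, which is exactly the conclusion we want.

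Suppose for contradiction that $\Delta(H^{*}) > \lambda\, \delta(H^{*})$. First, apply the standard minimum-degree cleaning (in the spirit of Lemma~\ref{lem:min-degree}) to replace $H^{*}$ by a subgraph with at least half the edges in which every vertex has degree at least a constant fraction of the average degree $d = 2 e(H^{*})/m$. Partition the vertices of the cleaned graph into dyadic classes $V_j = \{v : d(v) \in [2^{j}d_{0}, 2^{j+1}d_{0})\}$ for $j = 0, 1, \ldots, J$, where $d_{0}$ is a constant multiple of $d$ and $J = \lceil \log_{2} \lambda \rceil = O\bigl((1/\alpha)^{2}\bigr)$. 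Since $H^{*}$ fails $\lambda$-almost-regularity, classes with large $j$ must be populated, and a pigeonhole argument over the $J$ classes isolates some $V_{j^{*}}$ that carries a $1/J$ fraction of all edge endpoints.

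Now the crux: from $V_{j^{*}}$ extract a subgraph $H' \subseteq H^{*}$ on $m' = |V(H')| < m$ vertices with $e(H') \geq \frac{2}{5}(m')^{1+\alpha}$ and $m' \geq n^{\beta}$. The key calculation is that vertices in $V_{j^{*}}$ have degree $\asymp 2^{j^{*}} d_{0}$, so there are at most $\lesssim e(H^{*})/(J \cdot 2^{j^{*}} d_{0})$ of them; taking $H'$ to be a bipartite subgraph between $V_{j^{*}}$ and a well-chosen subset of its neighborhood makes $m'$ drop by a factor that, per one application of the reduction, translates to $m' \leq m^{1/(1+\alpha)}$ in the worst case. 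Iterating this mental calculation $O(1/\alpha)$ times would shrink the vertex count only to $n^{1/(1+\alpha)^{1/\alpha}} \geq n^{\beta}$, which is exactly why $\beta = \alpha(1-\alpha)/(1+\alpha)$ is the right threshold; any $H^{*}$ violating $\lambda$-almost-regularity admits such an $H'$, contradicting minimality.

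The main obstacle is the precise calibration of constants. Three parameters degrade simultaneously under the dyadic reduction — vertex count, edge count, and the regularity factor — and the theorem packages their joint behavior tightly: the exponent $(1/\alpha)^{2}$ on $\lambda$ comes from needing $J = O(1/\alpha^{2})$ dyadic classes before the ratio $\Delta/\delta$ can exceed $\lambda$, while the $2/5$ edge bound must survive the initial halving from degree-trimming plus a further loss at the dyadic split. The delicate part of the argument is choosing the bipartite subgraph in the contradiction step so that (i) its vertex count is strictly less than $m$ but still at least $n^{\beta}$, and (ii) its edge density beats the $\frac{2}{5}$ threshold required to lie in $\mathcal{F}$; matching the scale $d_{0}$ of the partition against the $n^{\beta}$ floor is where all the bookkeeping happens.
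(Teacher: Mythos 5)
The paper does not prove Theorem~\ref{almost-regular}; it is quoted as a black box from Erd\H{o}s and Simonovits~\cite{cube}, so there is no in-paper proof to compare against. Evaluating your proposal on its own merits, the minimum-counterexample framing has a genuine gap. You define $\mathcal{F}$ to be the set of subgraphs $H\subseteq G$ with $|V(H)|\geq n^{\beta}$ and $e(H)\geq\frac{2}{5}|V(H)|^{1+\alpha}$, take $H^{*}\in\mathcal{F}$ minimizing $m:=|V(H^{*})|$, and, assuming $H^{*}$ is not $\lambda$-almost-regular, try to extract a strictly smaller $H'\in\mathcal{F}$. But your own worst-case estimate is $m'\leq m^{1/(1+\alpha)}$, which falls below $n^{\beta}$ as soon as $m< n^{\beta(1+\alpha)}$, and the membership condition $m\geq n^{\beta}$ certainly permits the minimizer to sit in that range. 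In that case the reduction produces an $H'$ that lies outside $\mathcal{F}$, and minimality of $H^{*}$ is simply not contradicted. The phrase ``iterating this mental calculation $O(1/\alpha)$ times'' belongs to the genuinely iterative Erd\H{o}s--Simonovits scheme, which follows a trajectory starting from $n$ vertices and bounds the total number of reduction steps before termination; that global count does not transplant into a one-shot extremal argument anchored at an arbitrary minimizer of $\mathcal{F}$.

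A second, more basic objection: the proposal explicitly defers the ``bookkeeping'' --- the dyadic-partition reduction, the choice of $d_{0}$, how the exponent $(1/\alpha)^{2}$ in $\lambda$ arises, and the survival of the $\frac{2}{5}$ constant through degree-trimming and the dyadic split --- while asserting that it all works out. That bookkeeping \emph{is} the content of the theorem; there is no remaining mathematics once it is done. To repair the argument you would either need to drop the floor $|V(H)|\geq n^{\beta}$ from the definition of $\mathcal{F}$ and separately prove a lower bound on the size of the minimizer, or revert to the iterative scheme, carry out the step-by-step reduction lemma, and track the product of the per-step vertex shrinkage factors from $n$ down to the termination point.
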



\section{Tur\'an numbers of generalized cubes}

In this section we prove Theorem \ref{thm:main}. Our proof is partly based on the ideas of Pinchasi and Sharir \cite{PS}. The key new idea is Lemma \ref{correlated-matchings}. 
To state the lemma, we need some notation.

In a graph $G$, for any $S\subseteq V(G)$,
{\it the common neighbourhood} of $S$ in $G$ is defined by $N_G(S)=\bigcap_{v\in S} N_G(v),$
and the {\it common degree of $S$} in $G$ is $d_G(S)=|N_G(S)|$.
When $G$ is clear from the context, we will drop the subscripts. For a matching $M$ in the bipartite graph $G$ with bipartition $(A,B)$, we define 
$A_M=V(M)\cap A, ~~B_M= V(M)\cap B$.  We call the  subgraph induced by the vertex sets $ N(B_M)\setminus V(M) $ and  $N(A_M)\setminus V(M)$ the {\it neighbourhood graph} of $M$ and with some abuse of notation, for brevity, we denote it by $N(M)$.

Let $M$ and $L$ be two matchings in $G$. We write  $M\sim L$ if $L$ is a subgraph in $N(M)$. For $t$ non-negative integer, we say that an ordered pair $(M,L)$  of matchings is {\it $t$-correlated} if $M \sim L$ and there exists a vertex $v$ in $V(M)$ such that $d_{N(L)}(v)\geq t$.

\begin{lemma} \label{correlated-matchings}
Let $G$ be an $H_{s,t}$-free bipartite graph and $M$ be an $(s-1)$-matching in $G$.
Then the number of $s$-matchings $L$ in $N(M)$ such that $(M,L)$ is $2t$-correlated is at most $(s-1)(t-1)\cdot e(N(M))^{s-1}v(N(M))$.
\end{lemma}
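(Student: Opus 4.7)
The plan is to combine a structural consequence of $H_{s,t}$-freeness with an encoding/double-counting argument. The key structural fact, which I would establish first, is that for every $s$-matching $L'$ in $G$, the bipartite graph $N(L')$ has no $t$-matching: any $t$-matching in $N(L')$ has every $A$-side vertex adjacent to all of $B_{L'}$ and every $B$-side vertex adjacent to all of $A_{L'}$ (by the defining common-neighbor property of the two sides of $N(L')$), so together with $L'$ it would form a copy of $H_{s,t}$ in $G$. In particular $\nu(N(L')) \le t-1$.

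By bipartite symmetry it suffices to count bad $L$'s with witness $v \in A_M$; note that such a $v$ automatically belongs to $V(N(L))$ on the $A$-side, since each $b'_i \in B_L$ is a common neighbor of $A_M \ni v$. For each bad $L$ with witness $v$, I would encode it as a tuple $(v, L^-, u, a^*)$ obtained by selecting an edge $(a^*, u) \in L$ (with $u \in V(N(M)) \cap B$) as a distinguished edge and setting $L^- := L \setminus \{(a^*, u)\}$, viewed as an ordered $(s-1)$-matching. Each bad $L$ contributes at least $s!$ such tuples (from the choices of witness, distinguished edge, and ordering of $L^-$); meanwhile, summing over $v$ with the symmetric treatment for $v \in B_M$, the total number of valid tuples will be bounded by $(s-1)(t-1)\, e(N(M))^{s-1}\, v(N(M))$ once the following claim is established: for each fixed $(v, L^-, u)$, at most $t-1$ vertices $a^*$ yield a bad $L = L^- \cup \{(a^*, u)\}$ with witness $v$. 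Dividing out the $s!$ overcount then gives (in fact, improves on) the lemma's bound.

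The proof of the claim is the crux. Suppose for contradiction that $t$ distinct vertices $a^*_1, \ldots, a^*_t$ all yield bad completions, and set $P := N(v) \cap N(a'_1) \cap \cdots \cap N(a'_{s-1}) \setminus V(L^-)$, where $L^- = \{(a'_i, b'_i)\}_{i=1}^{s-1}$. Unpacking the $2t$-correlated condition shows $|P \cap N(a^*_j)| \ge 2t$ for each $j$. Greedily pick $p_j \in (P \cap N(a^*_j)) \setminus (\{u\} \cup B_M \cup \{p_1, \ldots, p_{j-1}\})$ for $j = 1, \ldots, t$; since at most $t + s - 1$ vertices are excluded from a set of size at least $2t$, the assumption $t \ge s$ leaves at least one choice at every step. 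Letting $b_k$ denote $a_k$'s partner in $M$ (so $v = a_1$ is matched to $b_1$), the $s$-matching $\tilde L^* := \{(v, u)\} \cup \{(a'_i, b_i) : i \in [s-1]\}$ and the $t$-matching $N^* := \{(a^*_j, p_j) : j \in [t]\}$ are vertex-disjoint, and every $H_{s,t}$-cross adjacency holds automatically: $\{v, a'_1, \ldots, a'_{s-1}\}$ is jointly adjacent to every $p_j$ by the definition of $P$, and $\{u, b_1, \ldots, b_{s-1}\}$ is jointly adjacent to every $a^*_j$ because $a^*_j \in N(B_M) \cap N(u)$. This exhibits a copy of $H_{s,t}$ in $G$, a contradiction.

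The most delicate step is the specific choice of $\tilde L^*$ in the proof of the claim. More natural candidates such as $L^- \cup \{(v, b_1)\}$ or $M \cup \{(a^*_1, u)\}$ fail to secure all required $H_{s,t}$-cross edges (for instance, in the first case one would need $b'_i \sim a^*_j$, which is not given by the hypotheses), and only the specific re-pairing that places $v$ with $u$ and each $a'_i$ with $b_i$ — mixing the $M$-edge through $v$ with the $L^-$-endpoints in a twisted way — makes every cross adjacency automatic from the common-neighbor definitions of $N(M)$ and $P$.
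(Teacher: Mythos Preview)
Your proof is correct and follows essentially the same route as the paper's. Both arguments fix a witness $v\in A_M$, an $(s-1)$-submatching $L^-$ of $N(M)$, and a vertex $u$ on the $B$-side, and then show that at most $t-1$ vertices $a^*$ can complete $L^-\cup\{a^*u\}$ to a bad $s$-matching; the $H_{s,t}$ copy you build via the ``twisted'' $s$-matching $\tilde L^*=\{vu\}\cup\{a'_ib_i\}$ together with $N^*=\{a^*_jp_j\}$ is exactly the paper's $M_2=\{b_1c_1,\ldots,b_{s-1}c_{s-1},xy\}$ and $M_1=\{u_iv_i\}$ under the dictionary $v\leftrightarrow x$, $u\leftrightarrow y$, $a'_i\leftrightarrow c_i$, $a^*_j\leftrightarrow u_j$, $p_j\leftrightarrow v_j$. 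Your greedy selection of the $p_j$'s (avoiding $\{u\}\cup B_M$ using $2t>s+t-1$) is equivalent to the paper's observation that $|N_{N(L_i)}(x)\setminus B_M|\ge t$.

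One minor point: the structural fact in your first paragraph, that $\nu(N(L'))\le t-1$ for every $s$-matching $L'$, is correct but is not actually used anywhere in your argument for this lemma (it is used later in the paper's proof of Theorem~\ref{thm:main}). Your tracking of the $s!$ overcount yields a slightly sharper constant than stated, but the paper is content with the cruder bound.
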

\begin{proof}  It suffices to prove that for any $x\in A_M$, $L'$ an $(s-1)$-matching in $N(M)$, and $y\in (V(N(M))\cap B)\setminus V(L')$, the number of $s$-matchings $L$ in $N(M)$ that contain $L'$ and $y$ and satisfy $d_{N(L)}(x)\geq 2t$ is
at most $t-1$.

Suppose otherwise, let $L'=\{c_1d_1,\dots, c_{s-1}d_{s-1}\}$, where $c_1,\dots, c_{s-1}\in A$ and
$d_1,\dots, d_{s-1}\in B$ for whih the claim fails. Then there exist $t$ distinct $s$-matchings $L_1,\dots, L_t$ in $N(M)$ containing $L'$ and $y$
that satisfy $d_{N(L_i)}(x)\geq 2t$. Let $u_1, u_2,\dots, u_t$ be distinct vertices such that $L_i=L'\cup \{u_i y\}$. For each $i\in [t]$, since $d_{N(L_i)}(x)\geq 2t$, we have $|N_{N(L_i)}(x)\setminus B_M|\geq t$.
 We can therefore find $t$ distinct vertices $v_1,\dots, v_t$  such that for each $i\in [t]$
 $v_i\in N_{N(L_i)}(x)\setminus B_M$. 

Let $B^*=\{b_1,\dots, b_{s-1}, y\}, C^*=\{x, c_1,\dots, c_{s-1}\}, U^*=\{u_1, \dots, u_t\}$, and $V^*=\{v_1,\dots, v_t\}$. It is easy to see that $G_1:=G[B^*\cup U^*]$, $G_2:=G[C^*\cup V^*]$ are both copies of $K_{s,t}$. Let $M_1:=\{u_1v_1,\dots, u_tv_t\}$, $M_2=\{b_1c_1,\dots,b_{s-1}c_{s-1}, xy\}$. One can easily check that $G_1\cup G_2\cup M_1\cup M_2$ is a copy of $H_{s,t}$ in $G$, contradicting $G$ being $H_{s,t}$-free.

\end{proof}

{\noindent \bf Proof of Theorem \ref{thm:main}.} Our choice of constant $C$ here will be explicit. Let $\alpha=\frac{2s-1}{2s+1}$. As $s\geq 2$, we have $\frac{3}{5}\leq \alpha<1$.
Let $\lambda$ be the constant derived from Theorem \ref{almost-regular} applied for $\alpha$.
By Theorem \ref{almost-regular}, it suffices to show that there is a constant $C=C(s,t)>0$ such that the following holds for sufficiently large $n$:
if $G$ is a $\lambda$-almost-regular graph with $n$ vertices and $m\geq Cn^{1+\alpha}$ edges,
then $G$ contains a copy of $H_{s,t}$. By Proposition \ref{prop:maxcut}, we may further assume that $G$ is bipartite with a bipartition $(A,B)$. Let $\cM$ be the collection of all $(s-1)$-matchings in $G$. Denote
\begin{align*}
\cM_1&=\{M: M \in \cM,  e(N(M))\leq 2^{s+1} s! (s-1)(t-1)  v(N(M))\},\\
\cM_2&=\cM\setminus \cM_1,\\
\cM_2^{2t,s}&=\{(M,L): M\in \cM_2, L\in \cL, L \mbox{ is an $s$-matching}, M\sim L, (M,L) \mbox { is not $2t$-correlated} \}.
\end{align*}

We suppose that $G$ is $H_{s,t}$-free and derive a contradiction on the number of edges of the graph $G$. For doing so, we will use upper and lower bounds on the size of the set $\cM_2^{2t,s}$.
\medskip

{\bf Claim \ref{thm:main}.1.} $\sum_{M\in \cM_2} e(N(M))= \Omega(\frac{m^{3s-2}}{n^{4s-4}})$.

\medskip

\noindent {\it Proof of Claim.} Let us call a tree obtained from $K_{1,p}$ by subdividing each edge once a $p$-spider
of height $2$. Note that $\sum_{M\in \cM} v(N(M))$ counts the number of $(s-1)$-spiders of height $2$ in $G$.
Since $G$ is $\lambda$-almost-regular, $\Delta:=\Delta(G)\leq \lambda\cdot \delta(G)\leq \lambda\cdot 2m/n$. Thus,
\[
\sum_{M\in \cM} v(N(M))\leq n \Delta^{2s-2}= O\left(\frac{m^{2s-2}}{n^{2s-3}}\right).
\]

\noindent By the definition of $\cM_1$, we have $\sum_{M\in \cM_1} e(N(M))= O\left(\sum_{M\in \cM_1} v(N(M))\right)= O\left(\frac{m^{2s-2}}{n^{2s-3}}\right).$

\noindent On the other hand, $\sum_{M\in \cM} e(N(M))$ counts the number of $H_{1,s-1}$'s in $G$.
So by Lemma \ref{H1t-count}, we have $
\sum_{M\in \cM} e(N(M)) =  \Omega\left( \frac{m^{3s-2}}{n^{4s-4}}\right).
$
Since $m\geq Cn^{4s/(2s+1)}$ and $n$ is sufficiently large, we have $\frac{m^{3s-2}}{n^{4s-4}}\gg \frac{m^{2s-2}}{n^{2s-3}}$ thus the claim follows. \qed

\medskip

Now consider a matching $M\in \cM_2$. By Lemma \ref{matching-count},
the number of $s$-matchings $L$ in $N(M)$ is at least $(1/2^s s!)e(N(M))^s$. By Lemma \ref{correlated-matchings} and the
definition of $\cM_2$, the number of $s$-matchings $L$ in $N(M)$ such that $(M,L)$ is $2t$-correlated is at most

\[(s-1)(t-1) e(N(M))^{s-1} v(N(M)) \leq \frac{ e(N(M))^s}{2^{s+1} s!}.\]

Hence the number of $s$-matchings $L$ in $N(M)$ such that
$(M,L)$ is not $2t$-correlated is at least $(1/2) (1/2^s s!)e(N(M))^s$.

By Claim \ref{thm:main}.1, the convexity of the function $f(x)=x^s$ and the fact that $|\cM_2|\leq m^{s-1}$,
\[
|\cM_2^{2t,s}|\geq (1/2^{s+1} s!)\sum_{M\in \cM_2} e(N(M))^s= \Omega\left(\frac{(\sum_{M\in \cM_2} e(N(M)))^s}{|\cM_2|^{s-1}}\right)
=\Omega\left(\frac{m^{2s^2-1}}{n^{4s^2-4s}}\right).
\]

\medskip

{\bf Claim \ref{thm:main}.2.} $|\cM_2^{2t,s}| \leq  \binom{t-1}{s-1}(2t-1)^{s-1}m^s$.

\medskip

{\noindent \it Proof of Claim.} Let $L$ be an $s$-matching in $G$. Since $G$ is $H_{s,t}$-free, $N(L)$ has matching number at most
$t-1$. Since $N(L)$ is bipartite, by the K\"onig-Egerv\'ary theorem it has a vertex cover $Q$ of size at most $t-1$. Let $Q^+$ denote the
set of vertices in $Q$ that have degree at least $2t$ in $N(L)$ and $Q^-=Q\setminus Q^+$. 
If $M$ is an $(s-1)$-matching in $G$
that satisfies $M\sim L$ and that $(M,L)$ is not $2t$-correlated, then
$M$ is contained in $N(L)$ and could not contain any vertex in $Q^+$.
Since $Q=Q^+\cup Q^-$ is a vertex cover in $N(L)$, each edge of $M$ must contain a vertex in $Q^-$. Thus,
\[|\cM_2^{2t,s}| \leq \binom{|Q^-|}{s-1} (2t-1)^{s-1} m^s\leq \binom{t-1}{s-1}(2t-1)^{s-1}m^s.\]
\qed

Combining the lower and upper bounds on $|\cM_2^{2t,s}|$, we get that $\frac{m^{2s^2-1}}{n^{4s^2-4s}}=O(m^s)$,
which implies that $m= O(n^{4s/(2s+1)})$, where the constant factor in $O(\cdot)$ only depends on $s$ and $t$.
This contradicts that $m\geq Cn^{4s/(2s+1)}$, assuming $C$ is chosen to be sufficiently large.
\qed


\section{Asymmetric bipartite Tur\'an numbers of Theta graphs}
In this section we establish a upper bound (i.e., Theorem \ref{thm:asymm-theta}) of the asymmetric bipartite Tur\'an numbers of theta graphs $\theta_{k,p}$.
This, in turn, will be crucial in the proof of Theorem \ref{thm:3-comb-pasting}.

Our proof, in a conspectus, employs the standard breadth-first-search tree (BFS-tree) approach and
thus the major challenge is to show that the distance levels of the BFS-tree should grow in magnitude rapidly.
This will be essentially unravelled by the following lemma,
where we adopt a modification of the so-called ``blowup method" by Faudree and Simonovits \cite{FS}.
A similar lemma was proved in \cite{JY}.

\begin{lemma} \label{lem:asymm-theta}
Let $k,p,t$ be positive integers, where $k,p\geq 2$ and $t\leq k-1$.
Let $T$ be a tree of height $t$ rooted at a vertex $x$. Let $A$ be the set of vertices
at distance $t$ from $x$ in $T$. Let $B$ be set of vertices disjoint from $V(T)$. Let $G$ be
a bipartite graph with a bipartition $(A,B)$. If $T\cup G$ is $\theta_{k,p}$-free
then $e(G)\leq 2kt p^t\cdot (|A|+|B|)$.
\end{lemma}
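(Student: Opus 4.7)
The plan is to proceed by induction on the height $t$ of the tree $T$, combining a branching dichotomy at the root $x$ with a Faudree--Simonovits-style blow-up construction in the dense regime. Assume for contradiction that $e(G) > 2ktp^t(|A|+|B|)$. By Lemma \ref{lem:min-degree} we pass to a subgraph $G' \subseteq G$ with $e(G')\ge e(G)/2$ whose minimum degrees on both sides of the bipartition are at least $ktp^t/2$. Let $x_1,\ldots,x_d$ be the children of $x$ in $T$ whose subtree $T_i$ contains a vertex of $A$, and write $A_i = A\cap V(T_i)$, $G_i = G[A_i,B]$.

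\emph{Case 1 ($d \le p-1$).} Each $T_i$ has height at most $t-1$ and $T_i \cup G_i$ is $\theta_{k,p}$-free (being a subgraph of $T\cup G$). For the base case $t=1$, each $A_i$ is a singleton so $e(G_i)\le|B|$; otherwise we apply the inductive hypothesis to each $(T_i, A_i, B, G_i)$. Summing over the at most $p-1$ branches,
\[
e(G) = \sum_i e(G_i) \le 2k(t-1)p^{t-1}\bigl(|A| + (p-1)|B|\bigr) \le 2ktp^t(|A|+|B|),
\]
which contradicts the assumption and establishes the bound in this case.

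\emph{Case 2 ($d \ge p$).} I directly construct a $\theta_{k,p}$ in $T\cup G'$. Choose $p$ branches $x_1,\ldots,x_p$ and vertices $y_i\in A_i\cap V(G')$ in each; the $T$-paths from $x$ to the $y_i$, residing in distinct branches, are pairwise internally vertex-disjoint. It then suffices to build $p$ pairwise internally vertex-disjoint $G'$-paths of length $k-t$ from the $y_i$'s to a common vertex $z \in A\cup B$. I would do this greedily: build the first length-$(k-t)$ $G'$-path from $y_1$ to some $z$ via BFS, and then iteratively route paths from $y_2,\ldots,y_p$ to $z$ while avoiding the $O(pk)$ previously used vertices. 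Since the minimum degree $ktp^t/2$ of $G'$ greatly exceeds $O(pk)$, enough fresh neighbours are available at every step of the greedy extension. Concatenating the $T$-paths with the $G'$-paths gives the required $\theta_{k,p}$, a contradiction.

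\emph{Main obstacle.} The principal difficulty is the Case~2 construction of $p$ internally disjoint $G'$-paths of prescribed length $k-t$ meeting at a common endpoint, especially when $k-t$ is small (for example $t=k-1$, where one needs a common $G'$-neighbour of $p$ prescribed vertices). In this regime one refines Case~1 via the partition $B_i = \{b\in B : N_G(b)\cap A_i\neq\emptyset\}$: one has $\sum_i|B_i|\le(p-1)|B|$ unless some $b\in B$ sees $p$ different branches of $x$, in which case the $p$ paths $x\to y_i\to b$ of length $t+1=k$ already form a $\theta_{k,p}$. A careful combination of the two cases, together with the greedy extension when $k-t\ge 2$, yields the bound for every $t\le k-1$.
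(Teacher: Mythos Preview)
Your Case~1 is correct and clean: when the root has at most $p-1$ relevant branches, the induction goes through exactly as you say.

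Case~2 has a genuine gap. High minimum degree lets you greedily \emph{grow} a path of prescribed length from a given vertex, but it does not let you route a path of prescribed length between two \emph{prescribed} endpoints. Once you fix $z$ as the end of the first path from $y_1$, nothing guarantees that $y_2,\ldots,y_p$ can reach $z$ at all in $G'$ (the graph could be disconnected, or $z$ could simply fail to lie at distance exactly $k-t$ from $y_2$), let alone via paths avoiding $O(pk)$ forbidden vertices. The ``enough fresh neighbours at every step'' heuristic only addresses outgoing extensions, not arrival at a target; this is exactly the obstruction that makes theta/even-cycle arguments delicate. A secondary issue: after passing to $G'$ you may have lost all vertices from some $A_i$, so the $y_i$'s you want need not exist in $p$ distinct branches.

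Your sketched fix in the last paragraph works only when $t=k-1$: a vertex $b\in B$ with neighbours in $p$ branches then yields $p$ length-$k$ paths $x\to y_i\to b$. For $t<k-1$ such a $b$ gives paths of length $t+1<k$, so the dichotomy ``some $b$ sees $p$ branches / no $b$ does'' does not close the argument. The paper handles this by reversing the order of construction. It partitions $B$ (not the tree) into low-degree vertices, vertices $B^+$ whose neighbourhood is spread across branches (at most a $1/p$ fraction in any one $S_i$), and vertices $B^-$ concentrated in one branch. On $G[A\cup B^+]$, if it is dense one first builds the spider (a $p$-fan of paths of length $k-t-1$ from a single hub) inside $G[A\cup B^+]$ using Lemma~\ref{tree-embed}, so that the leaves land in $B^+$; only then are the leaves attached, one to each of $p$ distinct branches, using the spreading property of $B^+$. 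On $B^-$ one keeps only the edges to the dominant branch and applies induction. The point is that the common vertex of the $\theta_{k,p}$ is the hub of the spider, found \emph{after} the dense subgraph is located, not a vertex $z$ chosen in advance that the remaining $y_i$'s must hit.
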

\begin{proof}
We use induction on $t$. For the basis case $t=1$, let
\[B^+=\{y\in B: d_G(y)\geq pk\}\quad \mbox{and } \quad B^-=B\setminus B^+.\]
By definition, $e(G[A\cup B^-])\leq pk |B^-|$. We show that $e(G[A\cup B^+])\leq pk |A\cup B^+|$.
Suppose that is not the case. Then $G[A\cup B^+]$ has average degree at least $2pk$ and
hence (by Proposition \ref{prop:maxcut}) contains a subgraph $H$ with minimum degree at least $pk$. If $k$ is odd then let $v$ be a vertex in
$V(H)\cap A$. If $k$ is even then let $v$ be a vertex in $V(H)\cap B$.
Let $F$ denote the union of $p$ paths of length $k-2$ that share a common endpoint $u$ but are otherwise vertex disjoint,
and view $u$ as the root of the tree $F$.
By Lemma~\ref{tree-embed}, $H$ contains a copy $F'$ of $F$ which has $v$ as its root.
Let $v_1,\dots, v_p$ denote the leaves of $F'$. By our choice of $v$, we have $v_1,\dots, v_p\in V(H)\cap B^+$.
By the definition of $B^+$, $d_G(v_i)\geq pk$ for each $i\in [p]$. Hence we can find distinct vertices $w_1,\dots, w_p$
in $A$ that lie outside $V(F')$ such that $v_iw_i\in E(G)$ for each $i\in [p]$. Now, $F'\cup \{v_1w_1,\dots, v_pw_p\}
\cup \{xw_1,\dots, xw_p\}$ forms a copy of $\theta_{k,p}$ in $T\cup G$, a contradiction. Hence, we have
$e(G[A\cup B^+])\leq pk |A\cup B^+|$. Putting everything together, we have
$e(G)\leq pk (|A|+|B|)$. So the basis case holds.

For the induction step, let us consider $t\geq 2$.
Let $x_1,\dots, x_q$ denote all children of $x$ in $T$. For each $i\in [q]$,
let $T_i$ denote the subtree of $T-x$ that contains $x_i$ and let $S_i=V(T_i)\cap A$. Then $S_1,\dots, S_q$ partition $A$.
For each $u\in A$, let $P_u$ denote the unique path from $u$ to $x$ in $T$.
Then for each $u\in A$, $P_u$ has length $t$, and if $u,v$ lie in different $S_i$'s then $V(P_u)\cap V(P_v)=\{x\}$.
Let
\[B^*=\{y\in B: d_G(y)<kp^2\}.\]
By definition, it is clear that we have
\begin{equation}\label{equ:B^*}
e(G[A\cup B^*])\leq kp^2\cdot |B^*|.
\end{equation}
We further partition $B\setminus B^*$ into the following two sets.
Let
\[B^+=\{y\in B \setminus B^*: \mbox{ for } \forall i\in [q], y \mbox{ has no more than } d_G(y)/p \mbox{ neighbours in } S_i\}\]
and $B^-=B \setminus (B^*\cup B^+).$
We now observe the following property for $B^+$.

\medskip

{\bf Claim 1.} For each $y\in B^+$ and any subset $I\subseteq [q]$ of size $p-1$, we have $|N_G(y)\setminus \bigcup_{i\in I} S_i|>kp$.

\medskip

{\noindent \it Proof of Claim 1.} Since $y\in B^+$, for any $i\in I$, we have $|N_G(y)\cap S_i|<d_G(y)/p$. Hence,
\[\left|N_G(y)\setminus \bigcup_{i\in I} S_i\right|\geq \left(1-\frac{p-1}{p}\right)\cdot d_G(y)=\frac{d_G(y)}{p}\geq kp,\]
proving the claim. \qed

\medskip

We prove two more claims, which bounds $e(G[A\cup B^+])$ and $e(G[A\cup B^-])$, respectively.

\medskip

{\bf Claim 2.} $e(G[A\cup B^+])\leq kp\cdot |A\cup B^+|$.

\medskip

{\noindent \it Proof of Claim 2.} Let $F$ be the tree consisting of $p$ paths of length $k-t-1$ that share a common
endpoint $u$ but are otherwise vertex disjoint; also view $u$ as the root of $F$. So $F$ has $p(k-t-1)$ edges.
Suppose for a contradiction that $e(G[A\cup B^+])> kp\cdot |A\cup B^+|$.
Then by Proposition \ref{prop:maxcut}, $G[A\cup B^+]$ contains a subgraph $H$ with minimum degree more than $kp$.
If $k-t-1$ is odd, then let $v$ be a vertex in $V(H)\cap A$;
and if $k-t-1$ is even, then let $v$ be a vertex in $V(H)\cap B^+$. By Lemma~\ref{tree-embed},
$H$ contains a copy $F'$ of $F$ which has $v$ as its root.
Let $v_1,\dots, v_p$ denote the leaves of $F'$.
By our choice of $v$, we have $v_1,\dots, v_p\in V(H)\cap  B^+$.
By Claim 1, we can find vertices $w_1,\dots, w_p$ outside $V(F')$ such that
they all lie in different $S_i$'s and $v_1w_1, v_2w_2,\dots, v_pw_p\in E(G)$.
Indeed, for any $\ell\leq p$, suppose we have found $w_1,\dots, w_{\ell-1}$.
By Claim 1, $v_\ell$ has at least $kp$ neighbours that lie outside the $S_j$'s that contain vertices in $\{w_1,\dots, w_{\ell-1}\}$.
Among these neighbours we can find one that also does not lie in $V(F')$.
We let $w_\ell$ be such a vertex. Since $w_1,\dots, w_p$ all lie in different $S_i$'s,
the paths $P_{w_1},\dots, P_{w_p}$ pairwise intersect only in vertex $x$.
Now $F'\cup \{v_1w_1,\dots, w_pw_p\}\cup \bigcup_{i=1}^k P_{w_i}$ forms a copy of $\theta_{k,p}$ in $G$, a contradiction.
Hence we must have $e(G[A\cup B^+])\leq kp\cdot |A\cup B^+|$.\qed

\medskip

{\bf Claim 3.} $e(G[A\cup B^-])\leq 2k(t-1) p^t\cdot |A\cup B^-|$.

\medskip
{\noindent \it Proof of Claim 3.} For each $y\in B^-$ by definition there exists $i(y)\in [q]$
such that $|N_G(y)\cap S_{i(y)}|\geq d_G(y)/p$; let us fix such an $i(y)$.
We then define a subgraph $H$ obtained from $G[A\cup B^-]$ by only taking the edges from every $y\in B^-$ to $N_G(y)\cap S_{i(y)}$.
By the definition of $H$, we see that $e(H)\geq \frac{1}{p}e(G[A\cup B^-])$.
Now, for each $j\in [q]$ let $B_j=\{y\in B^-: i(y)=j\}$.
Then in fact $H$ is the vertex-disjoint union of
$H[S_1\cup B_1], H[S_2\cup B_2],\dots, H[S_q\cup B_q]$.
Let $j\in [q]$. Note that $T_j$ is tree of height $t-1$ rooted at $x_j$ and $B_j$ is the set of vertices at distance $t-1$ from $x_j$.
Also, $B_j$ is vertex disjoint from $T_j$ and $H[S_j\cup B_j]$ is a bipartite graph with a bipartition $(S_j, B_j)$.
Since $T_j\cup H[S_j\cup B_j]$ is $\theta_{k,p}$-free, by the induction hypothesis,
$e(H[S_j,B_j])\leq 2k(t-1)p^{t-1}\cdot |S_j\cup B_j|$. Hence,
\[e(H)=\sum_{j=1}^p e(H[S_j, B_j])\leq 2k (t-1) p^{t-1}\cdot \sum_{j=1}^p |S_j\cup B_j|\leq 2k (t-1) p^{t-1}\cdot |A\cup B^-|,\]
implying that
$e(G[A\cup B^-])\leq p\cdot e(H)\leq 2k(t-1)p^t\cdot |A\cup B^-|.$
This proves Claim 3.\qed

\bigskip

Finally, combining \eqref{equ:B^*} with Claims 2 and 3, we have that
\begin{eqnarray*}
e(G) &=&e(G[A\cup B^*])+e(G[A,B^+])+e(G[A,B^-])\\
&\leq & kp^2\cdot |B^*| + kp\cdot (|A|+|B^+|)+ 2k(t-1) p^t\cdot (|A|+|B^-|)< 2ktp^t\cdot (|A|+|B|),
\end{eqnarray*}
finishing the proof of Lemma \ref{lem:asymm-theta}.
\end{proof}

We are ready to show Theorem \ref{thm:asymm-theta}.

\medskip

{\noindent \bf Proof of Theorem \ref{thm:asymm-theta}:}
Let $G$ be a $\theta_{k,p}$-free bipartite graph with a bipartition $(A,B)$ where $|A|=m$ and $|B|=n$.
Let $c=16k^2p^k$.
If $k$ is odd, then we assume $e(G)\geq c\cdot (mn)^{\frac{1}{2}+\frac{1}{2k}}+c\cdot (m+n)$.
If $k$ is even, then assume $e(G)\geq c\cdot m^{\frac{1}{2}+\frac{1}{k}} n^{\frac{1}{2}} +c\cdot (m+n)$.
Let $d_A=e(G)/|A|$ and $d_B=e(G)/|B|$. So each of $d_A, d_B$ is more than $c=16k^2p^k$.

By Lemma \ref{lem:min-degree},
$G$ contains a subgraph $G'$ with $e(G')\geq \frac{1}{2} e(G)$ such that each vertex in $V(G')\cap A$
has degree at least $\frac14d_A$ in $G'$ and that each vertex in $V(G')\cap B$ has
degree at least $\frac14 d_B$ in $G'$.
Fix a vertex $x\in V(G')\cap A$.
For each integer $i\geq 0$, let $L_i$ denote the set of vertices at distance $i$ from $x$ in $G'$,
and let $d_i=d_A$ if $i$ is odd and $d_i=d_B$ if $i$ is even.
So we see that every vertex in $L_{i-1}$ has degree at least $\frac14d_i$ in $G'$.

Using Lemma \ref{lem:asymm-theta}, we show that the (growth) ratio of two consecutive levels must be large in the following

\medskip

{\bf Claim.} For each $i\in [k]$, we have $|L_i|/|L_{i-1}|\geq \frac{d_{i}} {16kip^i}$. In particular,
$|L_i|\geq |L_{i-1}|$ holds.

\medskip

{\noindent \it Proof of Claim.} Since $d_i\geq 16k^2p^k$ for each $i$,
we observe that the second statement follows easily by the first statement.
So it suffices to prove the first statement, which we will prove by induction on $i$.
If $i=1$, then we have $\frac{|L_1|}{|L_0|}\geq \frac{1}{4} d_A\geq \frac{d_1}{16kp}$.
So the claim holds for the basis step.

For the inductive step, consider $i\geq 2$. Let $T_{i-1}$ be a breadth-first-search tree
in $G'$ rooted at $x$ with vertex set $L_0\cup L_1\cup \dots \cup L_{i-1}$. Applying Lemma \ref{lem:asymm-theta} to $T_{i-1}$
and $G'[L_{i-1}\cup L_i]$, we get
\[e(G'[L_{i-1}, L_i])\leq  2k(i-1) p^{i-1} (|L_{i-1}|+|L_i|).\]
Similarly, it holds that
\[e(G'[L_{i-2}\cup L_{i-1}|)\leq 2k(i-2) p^{i-2}(|L_{i-2}|+|L_{i-1}|)\leq 4k (i-2) p^{i-2}|L_{i-1}|,\]
where the last step holds because $|L_{i-1}|\geq |L_{i-2}|$ by the induction hypothesis.
All edges in $G'[L_{i-2}\cup L_{i-1}\cup L_i]$ are either in $(L_{i-2},L_{i-1})$ or in $(L_{i-1},L_i)$,
so we get that
\[e(G'[L_{i-2}\cup L_{i-1}\cup L_i])=e(G'[L_{i-2}\cup L_{i-1}|)+e(G'[L_{i-1}, L_i])\leq 2kip^i\cdot (|L_{i-1}|+|L_i|).\]
On the other hand, each vertex in $L_{i-1}$ has degree at least $\frac14 d_i$ in $G'$,
and all edges of $G'$ incident to $L_{i-1}$ lie in $G'[L_{i-2}\cup L_{i-1}\cup L_i]$.
Hence, we have
\[\frac14 d_i\cdot |L_{i-1}| \leq e(G'[L_{i-2}\cup L_{i-1}\cup L_i])\leq 2kip^i\cdot (|L_{i-1}|+|L_i|).\]
Solving for $|L_i|$, we get
$|L_i|\geq \left(\frac{d_i}{8kip^i} -1\right)\cdot|L_{i-1}|\geq \frac{d_i}{16kip^i}\cdot|L_{i-1}|,$
proving the claim.
\qed

\medskip

By the claim, we have
\[|L_k|\geq \alpha\cdot \prod_{i=1}^k d_i\cdot |L_0|=\alpha\cdot \prod_{i=1}^k d_i,\]
where $\alpha=\prod_{i=1}^k \frac{1}{16kip^i}$. Recall that $c=16k^2 p^k$.
So $\alpha c^k>1$.
Suppose first that $k$ is odd, say $k=2s+1$.
Then it follows that $L_k\subseteq B$ and
\[|L_k|\geq \alpha \cdot d_A^{s+1}d_B^s =\alpha\cdot \frac{e(G)^k}{m^{s+1}n^s}. \]
By the assumption, we have $e(G)>c\cdot (mn)^{\frac{1}{2}+\frac{1}{2k}}$,
which shows that $|L_{k}|\geq \alpha c^k n>n.$
This is a contradiction, since $L_{k}\subseteq B$ and $|B|=n$.
Now consider that $k$ is even, say $k=2s$.
Then we have
\[|L_{k}|\geq \alpha \cdot d_A^s d_B^s =\alpha\cdot \frac{e(G)^{k}}{m^s n^s}. \]
In this case $e(G)> c\cdot m^{\frac{1}{2}+\frac{1}{k}}n^{\frac{1}{2}}$.
This gives that $|L_{k}|\geq \alpha c^k\cdot \left(m^{\frac{1}{2}+\frac{1}{k}}n^{\frac{1}{2}}\right)^k/m^sn^s
=\alpha c^k\cdot m>m,$
again a contradiction, since $L_{k}\subseteq A$ and $|A|=m$.
This completes the proof of Theorem \ref{thm:asymm-theta}.
\qed

\medskip

One can promptly derive the following special case of Theorem \ref{thm:asymm-theta},
which will play an important role in the proof of Theorem \ref{thm:3-comb-pasting}.

\begin{corollary} \label{cor:theta3p}
Let $m,n\geq 2$ be integers. Then it holds that
\[z(m,n, \theta_{3,p})\leq 144p^3\cdot \left((mn)^{2/3}+m+n\right).\]
\end{corollary}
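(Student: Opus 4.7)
The plan is to deduce this corollary directly from Theorem \ref{thm:asymm-theta} by specializing $k=3$. Since $k=3$ is odd, the relevant branch of the theorem gives
\[
z(m,n,\theta_{3,p}) \leq c\cdot\left[(mn)^{\frac{k+1}{2k}} + m + n\right]
\]
with the exponent $\frac{k+1}{2k} = \frac{4}{6} = \frac{2}{3}$, which is exactly the exponent appearing in the corollary.

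The only thing left is to pin down the constant $c$. Inspecting the proof of Theorem \ref{thm:asymm-theta}, the author chose $c = 16 k^2 p^k$ at the very beginning of the argument. Plugging in $k=3$ yields $c = 16 \cdot 9 \cdot p^3 = 144 p^3$, which matches the constant in the statement of the corollary.

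There is no real obstacle here; the corollary is merely a textual re-packaging of the odd-$k$ case of Theorem \ref{thm:asymm-theta} with $k=3$ substituted. The only minor care needed is to confirm that the constant $c = 16k^2 p^k$ given in the proof of Theorem \ref{thm:asymm-theta} is indeed valid (which requires $c$ to satisfy $\alpha c^k > 1$ with $\alpha = \prod_{i=1}^k \frac{1}{16 k i p^i}$, a condition that is trivially met for $k=3$). Thus the proof reduces to one line: apply Theorem \ref{thm:asymm-theta} with $k=3$.
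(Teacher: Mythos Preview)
Your proposal is correct and matches the paper's own treatment: the paper simply states that Corollary~\ref{cor:theta3p} is a direct special case of Theorem~\ref{thm:asymm-theta}, and you have correctly extracted the explicit constant $c=16k^2p^k=144p^3$ from the proof of that theorem for $k=3$. The only point you might add for completeness is that Theorem~\ref{thm:asymm-theta} assumes $m\leq n$ while the corollary does not, but this is harmless since both $\theta_{3,p}$ and the stated bound are symmetric in $m$ and $n$.
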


\section{The Tur\'an exponent of $7/5$}
Here we prove the existence of the Tur\'an exponent of $7/5$.
This is achieved by the combination of Theorem \ref{thm:3-comb-pasting},
which states that $\ex(n,S_p)=O(n^{7/5})$ for all $p\geq 2$,
and the matched lower bound of this function for sufficiently large $p$ from \cite{BC}.

By considering a supergraph\footnote{i.e., a graph containing $S_p$ as its subgraph.} of $S_p$, in fact we will prove a slightly stronger result than Theorem \ref{thm:3-comb-pasting}.
We start with a definition introduced by Faudree and Simonovits \cite{FS}.
Let $H$ be a bipartite graph with an ordered pair $(A,B)$ of partite sets and $t\geq 2$ be an integer.
Define $L_t(H)$ to be the graph obtained from $H$ by adding a new vertex $u$ and joining $u$ to all vertices of $A$ by internally disjoint paths of length $t-1$ such that the vertices of these paths are disjoint from $V(H)$.

We observe that the theta graph $\theta_{3,p}$ are symmetric between its two partite sets.
So $L_3(\theta_{3,p})$ is uniquely defined.
The following proposition can be verified easily.

\begin{proposition} \label{containment}
For each $p\geq 2$, we have $S_p\subseteq L_3(\theta_{3,p})$ and thus $\ex(n, S_p)\leq \ex(n,L_3(\theta_{3,p})).$
\end{proposition}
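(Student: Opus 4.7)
The statement packages two assertions: the combinatorial containment $S_p\subseteq L_3(\theta_{3,p})$, and the Tur\'an inequality that follows from it by monotonicity (every $S_p$-free graph is automatically $L_3(\theta_{3,p})$-free, since a copy of $L_3(\theta_{3,p})$ would contain a copy of $S_p$). My plan therefore reduces to exhibiting an explicit embedding of $S_p$ into $L_3(\theta_{3,p})$, after which the inequality is a one-line consequence.

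I would first set up convenient notation. Write the $p$ internally disjoint length-$3$ paths of $\theta_{3,p}$ as $x - u_i - v_i - y$ for $i\in[p]$, and take $A=\{x,v_1,\ldots,v_p\}$ as the partite set used in the $L_3$-construction. Then $L_3(\theta_{3,p})$ is obtained by adjoining a new apex vertex $u$ together with, for each $a\in A$, an internal vertex $w_a$ and the two edges $uw_a$, $w_a a$. In particular $u$ is adjacent to $w_x, w_{v_1},\ldots, w_{v_p}$; each $w_{v_i}$ is adjacent only to $u$ and $v_i$; and in the original $\theta_{3,p}$ the pole $y$ is adjacent to the $v_i$'s while $x$ is adjacent to the $u_i$'s.

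The natural embedding then suggests itself by matching hubs of matching degree: send the three roots $a',b',c'$ of $S_p$ to $u,y,x$ respectively, and, for each leg $i$, send $a_i\mapsto w_{v_i}$, $b_i\mapsto v_i$, $c_i\mapsto u_i$. The five edges of the $i$th leg map to $uw_{v_i}$, $w_{v_i}v_i$, $v_iu_i$, $u_ix$, $v_iy$, each of which is an edge of $L_3(\theta_{3,p})$, and vertex-disjointness across distinct legs is immediate because the index $i$ is preserved throughout. This yields $S_p\subseteq L_3(\theta_{3,p})$, and the Tur\'an inequality follows. There is essentially no obstacle: the only minor decision is which partite set of $\theta_{3,p}$ to designate as $A$ in the $L_3$-construction, but the symmetry of $\theta_{3,p}$ between its two parts makes this choice immaterial.
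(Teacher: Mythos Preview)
Your embedding is correct: mapping $a',b',c'$ to $u,y,x$ and each leg $(a_i,b_i,c_i)$ to $(w_{v_i},v_i,u_i)$ realizes all five edges of each comb inside $L_3(\theta_{3,p})$, with disjointness across legs immediate. The paper itself gives no proof of this proposition, merely stating that it ``can be verified easily,'' so your explicit verification is precisely what is called for and there is nothing to compare.
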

\noindent Note that as a special case, the graph $L_3(\theta_{3,2})$ also denotes the subdivision of $K_4$,
where each edge of $K_4$ is replaced by an internally disjoint path of length two.

We are now in a position to prove the following strengthening of Theorem \ref{thm:3-comb-pasting}.

\begin{theorem} \label{thm:L3-theta}
For each $p\geq 2$, there exists a positive constant $c_p$ such that
\[\ex(n, L_3(\theta_{3,p}))\leq c_p n^{7/5}.\]
\end{theorem}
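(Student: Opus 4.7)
The plan is to apply standard regularization followed by a tree-extension argument using Corollary \ref{cor:theta3p} as the main tool, in the same spirit as the proof of Theorem \ref{thm:asymm-theta}.

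First, applying Theorem \ref{almost-regular} with $\alpha = 2/5$, Lemma \ref{prop:maxcut}, and Lemma \ref{lem:min-degree} reduces the problem to the following: $G$ is an $L_3(\theta_{3,p})$-free bipartite graph with bipartition $(X,Y)$, $n$ vertices, minimum degree at least $d/8$ where $d = \Theta(n^{2/5})$, and at least $c_p n^{7/5}$ edges for a suitable large constant $c_p = c_p(p)$. We aim to derive a contradiction.

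Fix $u \in X$ and consider the bipartite subgraph $G_u := G[N_2(u), Y \setminus N(u)]$, where $N_2(u) \subseteq X$ is the second neighbourhood of $u$. By almost-regularity and a direct double-count, $e(G_u) \geq |N_2(u)|\cdot d/8 - |N(u)|\cdot \lambda d = \Omega(d \cdot |N_2(u)|)$ once $|N_2(u)| \gg d$; when $|N_2(u)|$ is of order $d^2$ (the expected case when codegrees with $u$ are small), this gives $e(G_u) = \Omega(d^3) = \Omega(n^{6/5})$. Comparing with the Zarankiewicz-type threshold $144 p^3 \bigl((|N_2(u)|\cdot n)^{2/3} + |N_2(u)| + n\bigr) = O(p^3 n^{6/5})$ from Corollary \ref{cor:theta3p}, for $c_p$ sufficiently large the lower bound dominates, and hence $G_u$ contains a copy $\Theta$ of $\theta_{3,p}$ with $X$-side $A \subseteq N_2(u)$ of size $p+1$ and $Y$-side $B \subseteq Y \setminus N(u)$ of size $p+1$.

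The copy $\Theta$ extends to a copy of $L_3(\theta_{3,p})$ in $G$ as soon as we can find a system of distinct representatives (SDR) $\{w_a\}_{a \in A} \subseteq N(u)$ satisfying $w_a \in N(u) \cap N(a)$ for each $a \in A$; these provide the length-$2$ paths from $u$ to $A$, and taking their union with $\Theta$ yields the desired contradiction. The SDR step is the main obstacle: it may fail for an unfortunate choice of $\Theta$ if the $A$-side has its common neighbourhood with $u$ concentrated in a small subset of $N(u)$, i.e., Hall's condition is violated by a biclique-like configuration. I plan to resolve this in one of two ways: either (a) via a supersaturation argument leveraging the quantitative gap between $e(G_u)$ and the Zarankiewicz threshold (which can be made arbitrarily large by enlarging $c_p$) to produce many $\theta_{3,p}$-copies in $G_u$ and then averaging to find one admitting an SDR; or (b) via a dichotomy: in the ``low-codegree'' regime ($|N_2(u)| = \Omega(d^2)$) the SDR is automatic by near-distinctness of parents, while in the ``high-codegree'' regime the dense $K_{2,t}$-like substructure around $u$ allows a direct embedding of $L_3(\theta_{3,p})$ by an alternative route. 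Carrying out this SDR-handling cleanly, and in particular treating the biclique-like degenerate case, is the principal technical challenge of the proof.
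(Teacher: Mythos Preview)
Your outline has the right ingredients (BFS layers from a root $u$, the asymmetric bound of Corollary~\ref{cor:theta3p}), and your graph $G_u$ coincides with the paper's $H=G_1[L_2\cup L_3]$. But two steps are genuine gaps, not just details to be filled in.

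\medskip

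\textbf{Gap 1: the size of $N_2(u)$.} Your edge/threshold comparison needs $|N_2(u)|=\Theta(d^2)$; with only minimum degree this is false in general (a priori $|N_2(u)|$ could be as small as $\Theta(d)$), and in that regime $e(G_u)=\Theta(d|N_2(u)|)$ is far below the Zarankiewicz threshold $\Theta\bigl((|N_2(u)|n)^{2/3}\bigr)$. Your ``high-codegree $K_{2,t}$'' fallback does not obviously produce $L_3(\theta_{3,p})$. The paper handles this by a \emph{first} application of Corollary~\ref{cor:theta3p} one level earlier: splitting $L_2$ into $L_2^+$ (vertices with $\ge 2p+2$ neighbours in $L_1$) and $L_2^-$, one shows $G_1[L_1\cup L_2^+]$ is $\theta_{3,p}$-free (any such theta extends to $L_3(\theta_{3,p})$ precisely because the $L_2^+$-side has abundant neighbours in $L_1$), and then Corollary~\ref{cor:theta3p} forces $|L_2|=\Omega(d^2)$.

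\medskip

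\textbf{Gap 2: the extension/SDR step.} Even granting $|L_2|=\Theta(d^2)$, merely finding some $\theta_{3,p}$ in $G_u$ is not enough: its $L_2$-side may have all BFS-parents concentrated in a single vertex of $L_1$, killing the SDR. Your supersaturation idea does not come with a mechanism to avoid this concentration, and ``near-distinctness of parents'' is not implied by low codegree. The paper resolves this by \emph{restricting to subgraphs of $H$ in which any theta automatically extends}, rather than finding a theta first and extending afterward. Concretely, let $S_1,\dots,S_m$ be the partition of $L_2$ by BFS-parents in $L_1$; call a pair $(v,S_i)$ with $v\in L_3$ \emph{rich} if $v$ has $\ge 2p+1$ neighbours in $S_i$. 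Split $H$ into $H_1$ (rich edges) and $H_2$, and thin $H_2$ to $H_3$ keeping one edge per pair $(v,S_i)$. Then one shows $H_1$ is $\theta_{3,p^2}$-free (a theta there either concentrates in one $S_i$, where richness supplies the length-$2$ extensions through $x_i$, or spreads across many $S_i$'s, where the BFS-tree paths through $x$ do the job) and $H_3$ is $\theta_{3,p}$-free (by construction the $L_2$-vertices of any theta in $H_3$ lie in distinct $S_i$'s, so the tree paths to $x$ are internally disjoint). A second application of Corollary~\ref{cor:theta3p} to whichever of $H_1,H_3$ carries half the edges then gives $|L_3|>n$.

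\medskip

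In short, the missing idea is to reverse the order of operations: first pass to a structured subgraph where extension is guaranteed, then invoke the $\theta$-free bound --- and to do this at \emph{both} BFS steps. Your plan invokes the bound once, on all of $G_u$, and defers the structural work to an SDR argument that is not supplied.
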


\begin{proof}
We will show that it suffices to choose $c_p=12^4p^6.$
Suppose for a contradiction that there exists an $n$-vertex $L_3(\theta_{3,p})$-free graph $G$ with $e(G)> c_p n^{7/5}$.
By Proposition \ref{prop:maxcut}, $G$ contains a bipartite subgraph $G_1$ with
\begin{equation} \label{G2-min-deg}
d:=\delta(G_1)\geq d(G)/4\geq (c_p/2)\cdot n^{2/5}>(4\cdot 12^3p^6)\cdot n^{2/5}.
\end{equation}

Let $x$ be a vertex of minimum degree in $G_1$. For each $i\geq 0$, let $L_i$ denote the set of vertices at distance $i$
from $x$ in $G_1$. Then $|L_1|=|\delta(G_1)|= d$.
Let $L_2^+$ denote the set of vertices $v$ in $L_2$ such that $|N_{G_1}(v)\cap L_1|\geq 2p+2$,
and $L_2^-=L_2\setminus L_2^+$.

\medskip

{\bf Claim 1.} $G_1[L_1\cup L_2^+]$ is $\theta_{3,p}$-free.

\medskip

{\noindent \it Proof of Claim 1.}  Suppose for contradiction that $G_1[L_1\cup L_2^+]$ contains a copy $F$
of $\theta_{3,p}$. Let $A,B$ denote the two partite sets of $F$ where $A\subseteq L_1$ and
$B\subseteq L_2^+$. Then $|A|=|B|=p+1$. Suppose $B=\{b_1,\dots, b_{p+1}\}$. Since each vertex
in $L_2^+$ has at least $2p+2$ neighbours in $L_1$, we can find distinct vertices $c_1,\dots, c_{p+1}$
in $L_1\setminus A$ such that $b_1c_1,\dots, b_{p+1}c_{p+1}\in E(G_1)$. Now $F$ together with the paths
$b_1c_1x, \dots, b_{p+1}c_{p+1} x$ form a copy of $L_3(\theta_{3,p})$ in $G$, a contradiction. \qed

\medskip

{\bf Claim 2.} $|L_2|\geq d^2/(24^3p^{9/2})$.

\medskip
{\noindent \it Proof of Claim 2.}
By Claim 1 and Corollary \ref{cor:theta3p}, we have
\[ e(G_1[L_1\cup L_2^+])\leq 144 p^3\cdot \left(|L_1|^{2/3}|L_2^+|^{2/3}+|L_1|+|L_2^+|\right); \]
and by the definition of $L_2^-$,
$e(G_1[L_1\cup L_2^-])\leq (2p+2)\cdot |L_2^-|.$
Adding these inequalities up, we have
\begin{equation} \label{L1L2-upper}
e(G_1[L_1,L_2])=e(G_1[L_1\cup L_2^+])+e(G_1[L_1\cup L_2^-])\leq 144 p^3\cdot \left(|L_1|^{2/3}|L_2|^{2/3}+|L_1|+|L_2|\right).
\end{equation}
Since every vertex in $L_1$ has at least $d-1\geq 3d/4$ neighbours in $L_2$, it follows that
\begin{equation*} \label{L1L2-lower}
(3d/4)|L_1|\leq e(G_1[L_1,L_2])\leq 144 p^3\cdot \left(|L_1|^{2/3}|L_2|^{2/3}+|L_1|+|L_2|\right).
\end{equation*}
Since $d\geq 4\cdot 12^3p^6$, we see $144p^3|L_1|\leq (d/4)|L_1|$.
Thus it follows that either $144 p^3|L_1|^{2/3}|L_2|^{2/3}\geq (d/4)|L_1|$ or $144 p^3 |L_2|\geq (d/4)|L_1|$.
Using $|L_1|=d$, we get that
\[|L_2|\geq \min\left\{\frac{d^2}{24^3p^{9/2}}, \frac{d^2}{24^2 p^3}\right\}=\frac{d^2}{24^3p^{9/2}},\]
proving this claim.
\qed

\medskip

Next we consider the subgraph $H$ of $G_1$ induced on $L_2\cup L_3$, i.e.,
\[H=G_1[L_2\cup L_3].\]
Our goal in the rest of the proof is to reach a contradiction
by showing that $H$ can not contain theta graphs $\theta_{3,s}$ for large $s$,
which in turn shows that $|L_3|$ must be $\Omega(d^{5/2})$
and thus exceed the total number of vertices in $G$.

Let $T$ be a breadth-first search tree rooted at $x$ with vertex set $\{x\}\cup L_1\cup L_2$.
Let $x_1,\dots, x_m$ be the children of $x$ in $T$. For each $i\in [m]$, let $S_i$ be the set of
children of $x_i$ in $T$. Then $S_1,\dots, S_m$ partition $L_2$.
Since each vertex in $L_2$ has degree at least $d$ in $G_1$,
we have
\[e(G_1[L_1\cup L_2])+e(G_1[L_2\cup L_3])\geq d|L_2|.\]
On the other hand, by \eqref{G2-min-deg} and Claim 2, we have $d\geq 4\cdot 12^3p^6$
and thus $(d|L_2|)^{1/3}\geq 4\cdot 144\cdot p^3$,
which together with \eqref{L1L2-upper} imply that
\[e(G_1[L_1\cup L_2])\leq d|L_2|/4+144p^3(|L_1|+|L_2|)\leq d|L_2|/2.\]
Hence
\begin{equation}\label{H-lower}
e(H)=e(G_1[L_2\cup L_3])\geq d|L_2|/2.
\end{equation}

Given a vertex $u\in L_3$ and some $S_i$, we say the pair $(u,S_i)$ is {\it rich}, if $u$ has at least $2p+1$ neighbours of $H$ in $S_i$.
Let $E_H(u,S_i)$ denote the set of all edges in $H$ between $u$ and $S_i$.
We now partition $H$ into two (spanning) subgraphs $H_1,H_2$ such that
\[E(H_1)=\bigcup E_H(u,S_i) \quad \mbox{ and }\quad E(H_2)=E(H)\setminus E(H_1),\]
where the union in $E(H_1)$ is over all rich pairs $(u,S_i)$.
Note that by this definition, any $u\in L_3$ has at most $2p$ neighbours of $H_2$ in any $S_i$, i.e., $|E_{H_2}(u,S_i)|\leq 2p$.
Let $H_3$ be a subgraph of $H_2$ obtained by including exactly one edge in $E_{H_2}(u,S_i)$ over all pairs $(u,S_i)$ with $|E_{H_2}(u,S_i)|\geq 1$.
By the above discussion, it follows that
\begin{equation} \label{H3-bound}
e(H_3)\geq e(H_2)/(2p),
\end{equation}
and for any $u\in L_3$, all its neighbours in $H_3$ belong to distinct $S_i$'s.

\medskip

{\bf Claim 3.} $H_1$ is $\theta_{3,p^2}$-free.

\medskip

{\noindent \it Proof of Claim 3.} Suppose for contradiction that $H_1$ contains a copy $F$ of $\theta_{3,p^2}$.
Suppose $F$ consists of $p^2$ internally disjoint paths of length three between $u$ and $v$ where $u\in L_3$ and $v\in L_2$.
Let these paths be $ua_1b_1v, ua_2b_2v,\dots, ua_{p^2}b_{p^2}v$, where $a_1,\dots, a_{p^2}\in L_2$ and $b_1,\dots, b_{p^2}\in L_3$.

We consider two cases.
First, suppose that there exists some $S_i$ which contains $p$ different $a_j$'s.
Without loss of generality, suppose that $S_1$ contains $a_1,\dots a_p$.
For each $j\in [p]$, since $b_ja_j\in E(H_1)$,
by definition $(b_j,S_1)$ is a rich pair, i.e., there are at least $2p+1$ edges of $H$ from $b_j$ to $S_1$.
Similarly as $ua_1\in E(H_1)$, there are at least $2p+1$ edges of $H$ from $u$ to $S_1$.
Hence we can find distinct vertices $u', a_1',\dots, a_p'\in S_1\setminus \{a_1,\dots, a_p\}$
such that $uu', a_1'b_1, \dots, a'_pb_p\in E(H)$. Now
$F\cup \{uu', a'_1b_1,\dots, a'_pb_p\}\cup \{x_1u', x_1a_1',\dots, x_1a_p'\}$ forms a copy of $L_3(\theta_{3,p})$ in $G$, a contradiction.

Next, suppose that each $S_i$ contains at most $p-1$ different $a_j$'s. Then among $a_1,\dots, a_{p^2}$ we can find $p+1$ of them,
say $a_1,\dots, a_{p+1}$ that all lie in different $S_i$'s. Furthermore, we may assume that $a_1,\dots, a_p$ are outside the $S_i$'s that
contains $v$. Now $F$ together with the paths in $T$ from $x$ to $a_1,\dots, a_p,v$ form a copy of $L_3(\theta_{3,p})$ in $G$, a contradiction.
Hence $H_1$ must be $\theta_{3,p^2}$-free.
\qed

\medskip

{\bf Claim 4.} $H_3$ is $\theta_{3,p}$-free.

\medskip

{\noindent \it Proof of Claim 4.} Suppose for contradiction that $H_3$ contains a copy $F$ of $\theta_{3,p}$.
Suppose $F$ consists of $p$ internally disjoint paths of length three between $u$ and $v$,
where $u\in L_3$ and $v\in L_2$. Suppose these paths are $ua_1b_1v,\dots, ua_pb_pv$,
where $a_1,\dots, a_p\in L_2$ and $b_1,\dots, b_p\in L_3$.
By the definition of $H_3$, since $ua_1,\dots, ua_p\in E(H_3)$, $a_1,\dots, a_p$ must all lie in different $S_i$'s.
Also, for each $j\in [p]$ since $b_ja_j, b_jv\in E(H_3)$, $a_j$ and $v$
must lie in different $S_i$. So $a_1,\dots, a_p$ and $v$ all lie in different $S_i$'s.
Now, $F$ together with the paths in $T$ from $x$ to $a_1,\dots, a_p,v$ respectively
form a copy of $L_3(\theta_{3,p})$ in $G$, a contradiction. \qed

\medskip

Now, we consider two cases.

\medskip

{\bf Case 1.} $e(H_1)\geq e(H)/2$. In this case, by \eqref{H-lower}, we have
$e(H_1)\geq d|L_2|/4$. On the other hand, by Claim 3, we see that $H_1$ is $\theta_{3,p^2}$-free,
so by Corollary \ref{cor:theta3p}, we have
\begin{equation}\label{equ:Lbound}
d|L_2|/4\leq e(H_1)\leq 144p^6\cdot\left(|L_2|^{2/3} |L_3|^{2/3}+|L_2|+|L_3|\right).
\end{equation}
Since $144p^6|L_2|\leq d|L_2|/12$,
we have either
\begin{equation*}
144p^6 |L_2|^{2/3} |L_3|^{2/3}\geq d|L_2|/12 \quad \mbox{ or }\quad 144p^6|L_3|\geq d|L_2|/12.
\end{equation*}
Using this and Claim 2 that $|L_2|\geq d^2/(24^3p^{9/2})$, we can get
\[|L_3|\geq \min\left\{\frac{d^{3/2}|L_2|^{1/2}}{12^{9/2}p^9}, \frac{d|L_2|}{12^3p^6}\right\}
=\frac{d^{3/2}|L_2|^{1/2}}{12^{9/2}p^9}\geq \frac{d^{5/2}}{2^{3/2}12^6p^{45/4}}\]
Since $d\geq (4\cdot 12^3p^6)\cdot n^{2/5}$, this yields $|L_3|>n$,
a contradiction.

\medskip

{\bf Case 2.} $e(H_2)\geq e(H)/2$. Then by \eqref{H-lower} and \eqref{H3-bound},
we have $e(H_3)\geq e(H)/4p\geq d|L_2|/8p$.
By Claim 4, $H_3$ is $\theta_{3,p}$-free. Thus, by Corollary \ref{cor:theta3p} we get
\[d|L_2|/8p\leq e(H_3)\leq 144 p^3\cdot \left(|L_2|^{2/3} |L_3|^{2/3}+|L_2|+|L_3|\right).\]
Since $p\geq 2$, the above inequality would also imply \eqref{equ:Lbound}.
So we can apply the same analysis as in Case 1 to get a contradiction.

\medskip

This completes the proof of Theorem \ref{thm:L3-theta} (and thus of Theorem \ref{thm:3-comb-pasting}).
\end{proof}

We proved in Theorem \ref{thm:L3-theta} that $\ex(n, L_3(\theta_{3,p}))\leq O(n^{7/5})$.
An important idea in this proof is to use the asymmetric bipartite Tur\'an number of $\theta_{3,p}$,
which help showing that the BFS-tree grow rapidly. The use of asymmetric bipartite Tur\'an numbers
may find applications in other Tur\'an type extremal problems.


\end{document}